\newcommand{\sK}{{\mathcal K}}
\newtheorem{lemma1}{}[section]
\newenvironment{lemma}{\begin{lemma1}{\bf Lemma.}}{\end{lemma1}}
\newenvironment{theorem}{\begin{lemma1}{\bf Theorem.}}{\end{lemma1}}
\newenvironment{proposition}{\begin{lemma1}{\bf Proposition.}}{\end{lemma1}}
\newenvironment{corollary}{\begin{lemma1}{\bf Corollary.}}{\end{lemma1}}
\newenvironment{remark}{\begin{lemma1}{\bf Remark.}\rm}{\end{lemma1}}
\newenvironment{definition}{\begin{lemma1}{\bf Definition.}}{\end{lemma1}}
\newenvironment{notation}{\begin{lemma1}{\bf Notation.}}{\end{lemma1}}
\newenvironment{assumption}{\begin{lemma1}{\bf Assumption.}}{\end{lemma1}}
\newenvironment{remark*}{{\bf Remark.}}{}
\newenvironment{example*}{{\bf Example.}}{}
\newenvironment{assumption*}{{\bf Assumption.}}{}
\newcommand{\R}{\ensuremath{\mathbb{R}}}
\newcommand{\Q}{\ensuremath{\mathbb{Q}}}
\newcommand{\C}{\ensuremath{\mathbb{C}}}
\newcommand{\N}{\ensuremath{\mathbb{N}}}
\newcommand{\PP}{\ensuremath{\mathbb{P}}}
\newcommand{\holom}[3]{\ensuremath{#1:#2  \rightarrow #3}}
\newcommand{\fibre}[2]{\ensuremath{#1^{-1} (#2)}}
\newcommand\sA{{\mathcal A}}
\newcommand\sH{{\mathcal H}}
\newcommand\sO{{\mathcal O}}
\newcommand\sD{{\mathcal D}}
\DeclareMathOperator*{\pic}{Pic}
\DeclareMathOperator*{\sing}{sing}
\DeclareMathOperator*{\nons}{nons}
\newcommand{\Chow}[1]{\ensuremath{\mbox{\rm Chow}(#1)}}
\newcommand{\NEX}{\overline{\mbox{NE}}(X)}
\newcommand{\NAX}{\overline{\mbox{NA}}(X)}
\newcommand{\NA}[1]{ \ensuremath{ \overline { \mbox{NA} }(#1)} }
\title{Mori fibre spaces for K\"ahler threefolds} 
\date{\today}
\subjclass[2000]{32J27, 14E30, 14J30, 32J17, 32J25}
\keywords{MMP, rational curves, Zariski decomposition, K\"ahler manifolds}
\author{Andreas H\"oring}
\author{Thomas Peternell}
\address{Andreas H\"oring, Laboratoire de Math{\'e}matiques J.A. Dieudonn{\'e},
UMR 7351 CNRS, Universit{\'e} de Nice Sophia-Antipolis, 06108 Nice Cedex 02, France        
}
\email{hoering@unice.fr}
\address{Thomas Peternell, Mathematisches Institut, Universit\"at Bayreuth, 95440 Bayreuth, 
Germany}
\email{thomas.peternell@uni-bayreuth.de}
\begin{document}

\begin{abstract} 
Let $X$ be a compact K\"ahler threefold such that the 
base of the MRC-fibration has dimension two.
We prove that $X$ is bimeromorphic to a Mori fibre space.
Together with our earlier result \cite{HP15} this completes the MMP for compact K\"ahler threefolds: let $X$ be a non-projective compact K\"ahler threefold. Then $X$ has a minimal model or 
$X$ is bimeromorphic to a Mori fibre space over a non-projective K\"ahler surface.
\end{abstract}

\maketitle

\section{Introduction}

This paper continues our study of the minimal model program (MMP) for compact K\"ahler threefolds. In \cite{HP15} we established the existence of minimal models 
for compact K\"ahler threefolds such that $K_X$ is pseudoeffective. 
More precisely, minimal models are obtained,
as in the projective setting, by a sequence of contractions of extremal rays (in a suitable cone) and flips. 
By a theorem of Brunella \cite{Bru06} a smooth compact K\"ahler threefold has pseudoeffective $K_X$ if and only if $X$ is not uniruled. 
In the present work we deal with the remaining case
where $X$ is uniruled. The general fibre of the MRC-fibration $X \dashrightarrow Z$ is rationally connected, so
carries no holomorphic forms \cite[Cor.4.18]{Deb01}. Thus if the base $Z$ has dimension at most one,
then we obtain $H^2(X,\sO_X) = H^0(X, \Omega_X^2) = 0$.
In particular the K\"ahler manifold $X$ is projective by Kodaira's criterion. Since our main interest is the study
of non-projective K\"ahler threefolds, we focus on the
case where $Z$ has dimension two:

\begin{theorem} \label{theoremMFS}
Let $X$ be a normal $\Q$-factorial compact K\"ahler threefold with at most terminal singularities.
Suppose that the base of the MRC-fibration $X \dashrightarrow Z$ has dimension two.

Then $X$ is bimeromorphic to a Mori fibre space, i.e. 
there exists a MMP
$$
X \dashrightarrow X',
$$
consisting of contractions of extremal rays and flips, such that $X'$ admits a fibration $\holom{\varphi}{X'}{S}$ onto a normal compact  $\mathbb Q-$factorial K\"ahler surface with at most
klt singularities such that $-K_{X'}$ is $\varphi$-ample and $\rho(X'/S)=1$.
\end{theorem}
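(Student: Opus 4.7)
The plan is to run a minimal model program on $X$ and show it must terminate at a Mori fibre space over a two-dimensional base. The first observation is that $K_X$ is not pseudoeffective: resolving the indeterminacy of the MRC-fibration $X \dashrightarrow Z$, general fibres are smooth rational curves (since they are rationally connected of dimension one), and the resulting covering family has $K_X$-degree $-2$. Equivalently, $X$ is uniruled, so non-pseudoeffectivity of $K_X$ also follows from Brunella's theorem. This property is preserved at each step of an MMP, as uniruledness and the shape of the MRC-fibration are birational invariants.

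Using the Kähler-threefold machinery of $K_X$-negative extremal contractions and flips (the pseudoeffective case was handled in \cite{HP15}; the uniruled case is presumably the content of the earlier sections of this paper), I would perform the MMP. Each intermediate $X_i$ is a normal $\Q$-factorial Kähler threefold with terminal singularities, and $K_{X_i}$ remains non-pseudoeffective, so the MMP cannot terminate at a minimal model. Assuming termination of flips, the process therefore ends at a Mori fibration $\varphi: X' \to S$ with $\rho(X'/S) = 1$ and $-K_{X'}$ $\varphi$-ample.

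Next I would check $\dim S = 2$. Each fibre of $\varphi$ is Fano (as $-K_{X'}$ is $\varphi$-ample), hence projective and rationally connected. By the universal property of the MRC-quotient, the MRC-fibration of $X'$ factors through $\varphi$, so $\dim S$ is at least the dimension of the MRC-base of $X'$, which equals $\dim Z = 2$ by birational invariance of the MRC-fibration. On the other hand $\dim S < \dim X' = 3$ since $\varphi$ has positive-dimensional fibres, forcing $\dim S = 2$. Thus $\varphi$ is a conic bundle over a surface, and the structural properties of $S$ (normality, $\Q$-factoriality, klt, Kähler) follow from standard properties of Mori contractions together with a descent argument for a Kähler class along $\varphi$.

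The main obstacle is the MMP in the Kähler, uniruled setting. Without the usual projective cone theorem, one must produce $K_X$-negative extremal rays directly, show the associated contractions exist and have Kähler, $\Q$-factorial, terminal targets, construct flips, and establish termination. A natural starting point is to combine the covering family of MRC-rational curves with a Kähler class to locate extremal rays numerically; the harder step is constructing the contraction morphism analytically and verifying the dichotomy between fibre-type and divisorial contractions in the non-algebraic setting. Termination could then be approached by reducing, after finitely many steps, to a relative situation over (a birational model of) $S$ where the low fibre dimension forces the MMP to stop; once all of this machinery is in place, the three paragraphs above amount to putting the pieces together.
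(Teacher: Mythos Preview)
Your overall architecture is right, and you correctly identify the hard part, but you are missing the key manoeuvre that the paper uses to get around it. The paper does \emph{not} run a direct $K_X$-MMP and wait for a fibre-type contraction to appear. Instead it fixes a normalised K\"ahler class $\omega$ (so that $K_X+\omega$ has degree zero on the general MRC-fibre) and runs an MMP for the \emph{adjoint} class $K_X+\omega$. The point is that $K_X+\omega$ is pseudoeffective (via P\v{a}un's relative positivity result and non-uniruledness of $Z$), so one has a divisorial Zariski decomposition and can imitate the machinery of \cite{HP15}: the $(K_X+\omega)$-negative rays are automatically covered by at most a divisor, hence every step of this adjoint MMP is birational (divisorial contraction or flip). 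No fibre-type contraction is ever needed in the abstract K\"ahler setting.

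Once $K_{X'}+\omega'$ is nef, the fibration is produced not by an extremal contraction but by a base-point-free type theorem: the nef reduction of $K_{X'}+\omega'$ has two-dimensional base, and a Leray-spectral-sequence/Chow-family argument realises $K_{X'}+\omega'$ as the pullback of a nef and big class on a surface; contracting the null locus of that class gives the holomorphic fibration $\varphi\colon X'\to S'$. Now $-K_{X'}$ is $\varphi$-ample, so $\varphi$ is \emph{projective}, and one finishes with the ordinary relative cone/contraction theorem over $S'$ to reach the elementary Mori fibre space; termination throughout is by Shokurov's difficulty. Your proposed direct route would require constructing fibre-type K\"ahler contractions ex nihilo, which is exactly what the adjoint-class trick is designed to avoid.
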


It will be important to work with a special type of K\"ahler classes: 

\begin{definition}
Let $X$ be a normal $\Q$-factorial compact   K\"ahler threefold with at most terminal singularities.
Suppose that the base of the MRC-fibration $X \dashrightarrow Z$ has dimension two, and let $F \simeq \PP^1$ be a general fibre. 
A K\"ahler class $\omega$ on $X$ is normalised if $\omega \cdot F=2$.
\end{definition} 

Since the canonical class $K_X$ has degree $-2$ on $F$, 
the adjoint class $K_X+\omega$ is trivial on $F$. 
Using a recent result of P\v aun \cite{Pau12} we first prove
that $K_X+\omega$ is pseudoeffective. 
The proof of Theorem \ref{theoremMFS} then proceeds in two steps, the first being the existence of a MMP for
the adjoint class $K_X+\omega$:

\begin{theorem} \label{theoremadjointMMP}
Let $X$ be a normal $\Q$-factorial compact K\"ahler threefold with at most terminal singularities.
Suppose that the base of the MRC-fibration $X \dashrightarrow Z$ has dimension two.
Then there exists a MMP 
$$
X \dashrightarrow X'
$$
such that for every normalised K\"ahler class $\omega'$ on $X'$ 
the adjoint class $K_{X'}+\omega'$ is nef.
\end{theorem}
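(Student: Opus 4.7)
The plan is to fix a normalised K\"ahler class $\omega$ on $X$, first verify pseudoeffectivity of $K_X + \omega$, and then run a $(K_X+\omega)$-MMP. For the pseudoeffectivity step, I would pass to a bimeromorphic model on which the MRC-fibration is a holomorphic map $f : \tilde X \to Z$ and note that the normalisation $\omega \cdot F = 2$ gives $(K_{\tilde X}+\tilde \omega) \cdot F = 0$ on the general $\PP^1$-fibre $F$. P\v aun's theorem \cite{Pau12} on positivity of twisted relative adjoint classes, applied to $f$ with the K\"ahler surface $Z$ as base, then yields that $K_{\tilde X} + \tilde \omega$ is pseudoeffective, and pushing down gives pseudoeffectivity of $K_X + \omega$.

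Next I would iterate extremal contractions and flips in the adjoint setting. At each step $X_i$, if $K_{X_i} + \omega_i$ is not nef, the cone and contraction theorems for $\Q$-factorial K\"ahler threefolds developed in \cite{HP15}, applied to the pseudoeffective adjoint class rather than to $K_{X_i}$ itself, produce an extremal ray $R \subset \NE{X_i}$ with $(K_{X_i}+\omega_i) \cdot R < 0$; I would then contract $R$ if it is divisorial or flip it if small, using the K\"ahler flip existence from the same source. Two observations are crucial. Since $(K_{X_i}+\omega_i) \cdot F = 0$, the fibre class $[F]$ never lies in such an extremal ray $R$, so the general MRC-fibre is not contracted and the pushforward of $\omega_i$ remains a normalised K\"ahler class on the new model. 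Moreover, each operation preserves the property of being a $\Q$-factorial K\"ahler threefold with terminal singularities.

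Termination is the central difficulty. Divisorial contractions strictly decrease the Picard number, and termination of threefold flips of terminal $\Q$-factorial K\"ahler threefolds comes from the difficulty/discrepancy arguments of \cite{HP15}. After finitely many steps we reach a model $X'$ on which $K_{X'} + \omega_{X'}$ is nef. To extend nefness to every normalised K\"ahler class $\omega'$ on $X'$, I would use the convex path $\omega_t = (1-t)\omega_{X'}+t\omega'$, whose members are all normalised K\"ahler: if nefness first failed at some $t_0 \in (0,1]$, the cone theorem would produce an extremal ray $R$ with $(K_{X'}+\omega_{t_0})\cdot R = 0$ and strictly negative on $R$ for $t$ slightly larger than $t_0$; but then the MMP starting from $X$ and run with $\omega_{t_0}$ in place of $\omega$ would have had to contract this ray earlier, contradicting the termination just established.

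The hardest part will be the middle two steps: setting up the cone theorem, contraction theorem, flip existence, and termination for the adjoint class $K_X + \omega$ on K\"ahler threefolds, which is where the Zariski-decomposition machinery of \cite{HP15} must be brought to bear. The pseudoeffectivity input is a direct application of P\v aun, and the final extension to arbitrary normalised K\"ahler classes is essentially a convexity/continuity argument once the MMP has terminated.
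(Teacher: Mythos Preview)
Your overall plan matches the paper's: pseudoeffectivity from P\v aun, then adjoint cone and contraction theorems built on \cite{HP15} (these are the paper's Theorems \ref{theoremadjointNA} and \ref{theoremadjointcontraction}), Mori's flip theorem, and termination by difficulty. One point worth making explicit is that since $\omega$ is K\"ahler, any $(K_X+\omega)$-negative ray is automatically $K_X$-negative, which is why Mori's flip and Shokurov's difficulty apply without change.

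There is, however, a real gap in your last step. You fix one $\omega$, run the MMP to termination at $X'$, and then argue by convexity that $K_{X'}+\omega'$ is nef for every normalised $\omega'$. But the argument does not go through: $\omega_{t_0}$ is a class on $X'$, not on $X$, so there is no ``MMP from $X$ with $\omega_{t_0}$'' to invoke; and even reading it as continuing the MMP from $X'$, no contradiction arises---the ray $R$ has $(K_{X'}+\omega_{t_0})\cdot R=0$, so it is not negative for $\omega_{t_0}$, while contracting it for some $t>t_0$ just means the MMP should go on, which does not contradict termination of the $\omega$-MMP you already ran. A related difficulty is that the pushforward of a K\"ahler class under a divisorial contraction need not be K\"ahler, and under a flip there is no obvious pushforward at all, so tracking a single $\omega$ through the process is delicate. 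The paper sidesteps both issues by never fixing $\omega$: at each step $X_i$ one checks whether $K_{X_i}+\omega$ is nef for \emph{all} normalised K\"ahler $\omega$ on $X_i$, and if not, contracts a negative ray for any one that fails. Each such contraction is still $K_{X_i}$-negative, so difficulty and Picard number decrease as before and the process terminates with the desired conclusion built into the stopping condition.
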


Once we have a normalised K\"ahler class $\omega$ such that
$K_X+\omega$ is nef, the adjoint class $K_X + \omega$ is a natural candidate for the ``nef supporting class'' that defines a Mori fibre space structure. \\
The second step is to prove an analogue of the base-point free theorem for the adjoint class $K_X+\omega$.

\begin{theorem} \label{theorembpf}
Let $X$ be a normal $\Q$-factorial compact   K\"ahler threefold with at most terminal singularities.
Suppose that the base of the MRC-fibration $X \dashrightarrow Z$ has dimension two.
Let $\omega$ be a normalised K\"ahler class on $X$ such that $K_X+ \omega$ is nef.

Then there exists a {\em holomorphic} fibration $\holom{\varphi}{X}{S}$ onto a normal compact K\"ahler  surface $S$ such that $K_X+\omega$ is $\varphi$-trivial. 
\end{theorem}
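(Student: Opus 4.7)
The plan is to construct $\varphi$ as a holomorphic model of the MRC fibration, using the numerical triviality of $K_X+\omega$ along the generic fibres. Choose a bimeromorphic morphism $\mu\colon\tilde X\to X$ resolving the MRC so that one has a morphism $\tilde\pi\colon\tilde X\to \tilde Z$ onto a smooth surface with general fibre $\tilde F\simeq \PP^1$. The normalisation condition gives
\[
\mu^{*}(K_X+\omega)\cdot \tilde F=(K_X+\omega)\cdot F=-2+2=0,
\]
so the adjoint class is numerically trivial on the generic fibre.

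The first key step is to upgrade this to fibrewise numerical triviality on \emph{every} fibre of $\tilde\pi$. Curves that are $\mu$-exceptional contribute zero by the projection formula. For a non-$\mu$-exceptional component $C$ of a special $\tilde\pi$-fibre, the image $\mu(C)$ lies in the closure of an MRC-fibre in $X$; by continuity of intersection numbers on families of $1$-cycles, triviality on the generic fibre and nefness of $K_X+\omega$, one forces $(K_X+\omega)\cdot\mu(C)=0$. Given this fibrewise triviality, a descent argument produces a nef class $\beta$ on $\tilde Z$ with $\mu^{*}(K_X+\omega)\equiv \tilde\pi^{*}\beta$. One then extends the meromorphic map $X\dashrightarrow \tilde Z$ to a holomorphic fibration $\varphi\colon X\to S$ by contracting the $\beta$-trivial curves of $\tilde Z$: a $\mu$-exceptional curve $C\subset \tilde X$ that is not $\tilde\pi$-vertical would satisfy $\beta\cdot \tilde\pi_{*}C=\mu^{*}(K_X+\omega)\cdot C=0$, so $\tilde\pi(C)$ must lie among the curves to be contracted, which removes the indeterminacy of the MRC.

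The main obstacle is ensuring that the target surface $S$ is normal, compact and K\"ahler with at worst mild singularities, rather than a mere algebraic/analytic quotient. The push-forward of $\omega$ is only a positive current on $S$, and producing a genuine K\"ahler class requires analytic input: one invokes Demailly--P\v aun style characterisations of the K\"ahler cone together with the two-dimensional MMP techniques developed in \cite{HP15} to obtain both the K\"ahler property of $S$ and the klt nature of its singularities. The fibrewise triviality upgrade (from generic fibre to every fibre) is the second delicate point and depends crucially on the normalisation of $\omega$ combined with the nefness of $K_X+\omega$, since one has no algebraic base-point free theorem at one's disposal.
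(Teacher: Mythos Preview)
Your architecture matches the paper's: resolve the MRC fibration, show the pulled-back adjoint class is trivial on every fibre, descend it to a nef class on the base surface, contract the trivial curves there, and finally factor through the resolution via rigidity. The fibrewise triviality step and the descent are handled in the paper essentially as you outline (the paper makes the descent precise via $R^1 q_*\sO_\Gamma=0$ and the Leray spectral sequence for $\R$).

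There is, however, a genuine gap. To ``contract the $\beta$-trivial curves of $\tilde Z$'' you need Grauert's criterion, which requires the intersection form on that set of curves to be negative definite; equivalently, you need the descended class $\beta$ to be \emph{big}, i.e.\ $\beta^2>0$. You do not address this, and it is the technical heart of the paper's argument (Step~3). Nefness alone is not enough: a priori $\beta$ could have $\beta^2=0$, in which case the $\beta$-trivial curves need not be finite in number, let alone contractible. The paper obtains $\beta^2>0$ by writing, on a desingularisation $\hat X$ with maps $\hat p$ to $X$ and $\hat q$ to the base,
\[
\hat p^{*}(K_X+\omega)=\bigl[K_{\hat X/Z}+\hat p^{*}\omega-F-\hat q^{*}\eta_Z\bigr]-E+F+\hat q^{*}K_Z+\hat q^{*}\eta_Z,
\]
applying P\v aun's relative positivity theorem to make the bracketed term pseudoeffective, using that $K_Z$ is pseudoeffective (non-uniruled base), and then intersecting with $\hat q^{*}\alpha$ to extract a strictly positive contribution from $\eta_Z\cdot\alpha$ via Hodge index. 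Your proposal contains no substitute for this computation.

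Two secondary points. First, you misplace the difficulty: once $S$ is obtained from the K\"ahler surface $\tilde Z$ by contracting finitely many curves to points, the K\"ahler property of $S$ is immediate from \cite[Prop.~3.3(iii)]{DP04}; no two-dimensional MMP or delicate Demailly--P\v aun argument is needed here. Second, the theorem does \emph{not} assert that $S$ is klt---the paper explicitly remarks that this is unclear for the fibration produced by Theorem~\ref{theorembpf} (klt is only established later, in Lemma~\ref{lemmacontractionfibretype}, for an elementary contraction of fibre type). So that part of your proposal is both out of scope and unsupported.
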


By construction, the anticanonical class $-K_X$ is ample with respect to the fibration $X \rightarrow S$,
so we can use the cone and contraction theorem for projective morphisms (\cite{Nak87}, \cite{KM98}) to run a relative MMP. 
This MMP terminates with the Mori fibre space we are looking for.

In the situation of Theorem \ref{theorembpf} 
one can prove that $S$ is $\Q$-factorial with at most rational singularities, but
it is not quite clear whether $S$ is klt. However we can
prove this property for an elementary contraction
of fibre type, cf. Lemma \ref{lemmacontractionfibretype}.

{\bf Acknowledgements.} 
We thank the Forschergruppe 790 ``Classification of algebraic surfaces and compact complex manifolds'' of the Deutsche
Forschungsgemeinschaft for
financial support. 
A. H\"oring was partially also supported by the A.N.R. project CLASS\footnote{ANR-10-JCJC-0111}.

\section{Notation}

We use the same notation as in \cite{HP15}.
For the convenience of the reader we recall the most 
important definitions and basic results. 

\begin{definition} \label{definitionkaehler}
An irreducible and reduced complex space $X$ is K\"ahler if there exists a K\"ahler form $\omega$, i.e. a positive closed real $(1,1)$-form $\omega \in \mathcal A_\R^{1,1}(X)$, 
such that the following holds: for every point
$x \in X_{\sing}$ there exists an open neighbourhood $x \in U \subset X$ and a closed embedding $i_U: U \subset V$ into an open set $V \subset \mathbb C^N$, and  
a strictly plurisubharmonic $C^{\infty}$-function $f : V \rightarrow \C$ with 
$ \omega|_{U \cap X_{\nons}} = (i \partial \overline \partial f)|_{U \cap X_{\nons}}$. 
\end{definition} 

In the same manner one can define $(p,q)-$forms on an irreducible reduced complex space \cite{Dem85}, by duality we obtain the usual notions of currents. 

We will next define the appropriate analogue of the N\'eron-Severi
space $N^1(X)$ for a normal compact K\"ahler space, as well as 
the cones $\NEX$ and $\NAX$ contained in its dual $N_1(X)$. 
For any details we refer to \cite[Sect.3]{HP15}.

\begin{definition}  \cite[Defn. 4.6.2]{BEG13}  \cite[Defn.3.6]{HP15}
Let $X$ be an irreducible reduced complex space. Let $\sH_X$ be the sheaf of real parts of holomorphic
functions multiplied with $i$. A $(1,1)$-form with local potentials on $X$ is a global section
of the quotient sheaf $\sA^0_X/\sH_X$. We define the Bott-Chern cohomology
$$
N^1(X) := H^1(X, \sH_X).
$$
\end{definition}

\begin{remark} \label{bott} {\rm 
Using the exact sequence 
$$
0 \rightarrow \sH_X \rightarrow \sA^0_X \rightarrow \sA^0_X/\sH_X \rightarrow 0,
$$
and the fact that $\sA^0_X$ is acyclic, we obtain a surjective map
$$
H^0(X,  \sA^0_X/\sH_X) \rightarrow H^1(X, \sH_X).
$$
Thus we can see an element of the Bott-Chern cohomology group as a closed $(1,1)$-form with local potentials
modulo all the forms that are globally of the form $dd^c u$.  \\
Let $\sD_X$ be the sheaf of distributions. Using the exact sequence 
$$
0 \rightarrow \sH_X \rightarrow \sD_X \rightarrow \sD_X/\sH_X \rightarrow 0,
$$
we see that one obtains the same Bott-Chern group, considering $(1,1)-$currents $T$ with local potentials, which is to say that 
locally $T = dd^cu$ with $u$ a distribution.

}
\end{remark}

Dually we define

\begin{definition} Let $X$ be a normal compact complex space. Then $N_1(X)$ is the vector space of real closed currents of bidimension $(1,1)$ modulo the following equivalence relation:
$T_1 \sim T_2 $ if and only if $$T_1(\eta) = T_2(\eta)$$ for all real closed $(1,1)$-forms $\eta$.
\end{definition} 

In \cite[Prop.3.9]{HP15} we established a canonical isomorphism 
\begin{equation} \label{caniso} \Phi: N^1(X ) \to N_1(X)^* \end{equation} 
for any normal compact complex space $X$ in the Fujiki class $\mathcal C,$ i.e., for those $X$ which are bimeromorphic to a K\"ahler space.

\begin{definition} \label{definitionNA}
Let $X$ be a normal compact complex space in class $\mathcal C$. 
We define $\overline {NA}(X) \subset N_1(X)$ as the cone generated by the positive closed currents of bidimension $(1,1)$. 
\end{definition}

Given an irreducible curve $C \subset X$, we associate to $C$ the current of integration $T_C$. In the case  of isolated singularities, which is the  only case
relevant in our setting, we define

$$T_C(\omega) = \int_C \omega = \int_{\hat C} \pi^*(\omega), $$
where $\pi: \hat X \to X$ is a resolution of singularities, the curve $\hat C $ is the strict transform of $C$, and $\omega$ a
$d$-closed $(1,1)$-form on $X$. 
We define  the Mori cone $\overline {NE}(X) \subset N_1(X)$ as the closure of the cone generated by the currents $T_C$
and clearly have an inclusion
$$
\overline {NE}(X) \subset \overline {NA}(X). 
$$

\begin{definition} \label{definitionnefcone}
Let $X$ be an irreducible reduced compact complex space in class $\mathcal C$. 
We denote by ${\rm Nef}(X) \subset N^1(X)$ the cone generated by cohomology classes which are
nef in the sense of \cite[Defn.3]{Pau98}:
let $u \in N^1(X)$ be 
a class represented by a form $\alpha$ with local potentials. 
 Then $u$ is nef if for some positive $(1,1)-$form $\omega $ on $X$ and
for every $\epsilon > 0$ there exists $f_{\epsilon} \in \sA^0(X)$ such that
$$ \alpha + i \partial \overline{\partial} f_{\epsilon} \geq - \epsilon \omega.$$
The class $u$ is pseudo-effective, if it can be represented by a current $T$ which is locally of the form $T =   \partial \overline{\partial} \varphi$ with
$\varphi$ a plurisubharmonic function. 
\end{definition}

If $X$ is a normal compact K\"ahler space, we can also consider the open cone $\sK$ generated by
the classes of K\"ahler forms. In this case we know by \cite[Prop.6.1]{Dem92}\footnote{The statement in \cite[Prop.6.1.iii)]{Dem92} is for compact manifolds,
but the proof works in the singular setting, cf. also \cite[Rem.3.5]{HP15} for regularisation arguments in the singular setting.} that 
$$
{\rm Nef}(X)  = \overline {\sK}.
$$
 
 As in the projective setting, we have a duality statement:
 
\begin{proposition} \label{dual1} \cite[Prop.3.15]{HP15}
Let $X$ be a normal compact threefold in class $\mathcal C$. 
Then the cones ${\rm Nef}(X)$ and $ \overline{NA}(X)$ are dual via the canonical 
isomorphism $\Phi: N^1(X) \to N_1(X)^*$ given by (\ref{caniso}).
\end{proposition}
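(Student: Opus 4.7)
The plan is to prove the two inclusions ${\rm Nef}(X)\subseteq \overline{NA}(X)^{\vee}$ and $\overline{NA}(X)^{\vee}\subseteq {\rm Nef}(X)$, where the dual cone is formed with respect to the perfect pairing $\Phi$. Since $N^1(X)$ and $N_1(X)$ are finite-dimensional (by \cite[Prop.~3.9]{HP15}), Hahn--Banach separation is available in both spaces.

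For the first (easier) inclusion, fix a K\"ahler form $\omega$ on $X$ and let $u\in {\rm Nef}(X)$ be represented by a form $\alpha$ with local potentials. Given $T\in \overline{NA}(X)$ and $\epsilon>0$, the nef condition provides $f_\epsilon\in \sA^0(X)$ with $\alpha+i\partial\overline\partial f_\epsilon\geq -\epsilon\omega$. Since $T$ is closed and $f_\epsilon$ is smooth, $T(i\partial\overline\partial f_\epsilon)=0$, so pairing $T$ against the non-negative form $\alpha+i\partial\overline\partial f_\epsilon+\epsilon\omega$ gives $\Phi(u)(T)+\epsilon\,T(\omega)\geq 0$; letting $\epsilon\to 0$ yields $\Phi(u)(T)\geq 0$.

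For the reverse inclusion I would argue by contrapositive. Suppose $u\notin {\rm Nef}(X)$; then there exists $\epsilon_0>0$ such that $u+\epsilon_0[\omega]$ fails to lie in the closed convex cone $K_+\subset N^1(X)$ of classes admitting a smooth semi-positive representative modulo $dd^c$-exact terms. By Hahn--Banach separation in the finite-dimensional space $N^1(X)$, there exists $\tau\in N_1(X)$ with $\Phi(u+\epsilon_0[\omega])(\tau)<0$ and $\Phi(v)(\tau)\geq 0$ for all $v\in K_+$. It then suffices to show $\tau\in \overline{NA}(X)$, since $[\omega]\in K_+$ gives $\Phi([\omega])(\tau)\geq 0$ and hence $\Phi(u)(\tau)<-\epsilon_0\,\Phi([\omega])(\tau)\leq 0$, contradicting the hypothesis on $u$.

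The main obstacle is precisely this last step: showing that any $\tau\in N_1(X)$ pairing non-negatively with every class in $K_+$ is representable by a positive closed $(1,1)$-current. On a smooth K\"ahler manifold this is Demailly's duality (compare the arguments underlying \cite[Prop.~6.1]{Dem92}), obtained by applying Hahn--Banach in the weak topology on the space of test $(1,1)$-forms together with the Riesz representation of positive functionals as positive currents. In the present singular setting I would pass to a resolution $\pi:\hat X\to X$ in class $\mathcal C$, pull the separating functional back to $N^1(\hat X)$ via $\pi^*$, produce a positive current $\hat T$ on $\hat X$ by the smooth version of Demailly's theorem, and finally take $\tau=\pi_*\hat T$, invoking compatibility of $\Phi$ with $\pi_*$ (as established in the proof of (\ref{caniso})) to conclude that $\tau$ has the required positivity properties on $X$.
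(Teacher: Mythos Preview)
This proposition is not proved in the present paper; it is quoted from \cite[Prop.~3.15]{HP15} in the notation section, so there is no argument here to compare against. I therefore comment on the proposal itself.

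The first inclusion is fine. For the converse, your strategy of reducing to a smooth model and pushing forward a positive current is the right one, but there are two gaps. The minor one: you assert that $K_+$ (classes with a smooth semi-positive representative) is a \emph{closed} cone; in general it is not, and its closure is exactly ${\rm Nef}(X)$. This is easily repaired by separating $u$ from the closed convex cone ${\rm Nef}(X)$ directly. The more serious gap concerns the smooth model. You pass to ``a resolution $\pi:\hat X\to X$ in class~$\sC$'' and then invoke Demailly's duality on $\hat X$, but a resolution of a class~$\sC$ threefold is only a smooth class~$\sC$ manifold and need not be K\"ahler (Hironaka's smooth Moishezon non-K\"ahler threefold is the standard example), whereas the smooth duality you want to cite is established under a K\"ahler hypothesis. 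The correct move is to use that $X\in\sC$ \emph{means}, by definition, that there is a modification $\pi:\hat X\to X$ from a compact K\"ahler manifold $\hat X$. Then no Hahn--Banach on $X$ is needed at all: if $u\notin{\rm Nef}(X)$ then $\pi^*u\notin{\rm Nef}(\hat X)$ by \cite[Thm.~1]{Pau98}; the duality on the smooth K\"ahler manifold $\hat X$ produces a positive closed current $\hat T$ with $\langle\pi^*u,\hat T\rangle<0$; and $\pi_*\hat T\in\overline{NA}(X)$ satisfies $\langle u,\pi_*\hat T\rangle<0$ by the projection formula. (Incidentally, your phrase ``pull the separating functional back to $N^1(\hat X)$ via $\pi^*$'' is garbled: the functional $\tau$ lives in $N_1(X)$, while $\pi^*$ maps $N^1(X)\to N^1(\hat X)$, which is the wrong direction for transporting $\tau$.)
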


Finally we define the notion of the contraction of an extremal ray $R$. It is very important 
to consider extremal rays in the dual K\"ahler cone $\NAX$ rather than
in the Mori cone $\NEX$. 

\begin{definition} \label{definitioncontraction}
Let $X$ be a normal $\Q$-factorial compact K\"ahler space with at most terminal  singularities, and let $\omega$ be a K\"ahler class on $X$.
Let $R$ be a $(K_X + \omega)$-negative extremal ray in $\NAX$.
A contraction of the extremal ray $R$ is
a morphism \holom{\varphi}{X}{Y} onto a normal compact K\"ahler space $Y$,
such that $-(K_X+\omega)$ is a K\"ahler class on every fibre and
a curve $C \subset X$ is contracted if and only if $[C] \in R$.
\end{definition}

\section{MMP for the adjoint class}

In order to simplify the statements we will work under the following

\begin{assumption} \label{assumption}
Let $X$ be a normal $\Q$-factorial compact  K\"ahler threefold with at most terminal singularities.
Suppose that the base of the MRC-fibration $X \dashrightarrow Z$ has dimension two,
and let $\omega$ be a normalised K\"ahler class on $X$.
\end{assumption}

\begin{remark} \label{remarkassumption}
Observe that the surface $Z$ is not uniruled: since this is a bimeromorphic statement we can suppose that $X$ and $Z$ are smooth and the MRC-fibration is a morphism $\varphi: X \rightarrow Z$.
If $(C_t)_{t \in T} \subset Z$ is a dominant family of rational curves, the
surface $\fibre{\varphi}{C_t}$ is uniruled by the fibres of $\fibre{\varphi}{C_t} \rightarrow C_t$. Thus it carries no holomorphic $2$-form, in particular $\fibre{\varphi}{C_t}$ is projective by Kodaira' s criterion.
Thus Tsen's theorem applies and we obtain that $\fibre{\varphi}{C_t}$ is rationally connected. Now we conclude as in the algebraic case that $\varphi$ is not the MRC-fibration.
The same line of arguments also shows that the theorem of Graber-Harris-Starr \cite{GHS03} is also true in K\"ahler category.
\end{remark}

\subsection{Remarks on adjunction} \label{subsectionsurfaces}
Let $X$ be a normal  $\Q$-factorial compact K\"ahler threefold with at most terminal  singularities. 
Let $S \subset X$ be a prime divisor, i.e. an irreducible and reduced compact surface. Let $m \in \N$ be the smallest positive integer such that
both $mK_X$ and $mS$ are Cartier divisors on $X$. Then the canonical class $K_S \in \pic(S) \otimes \Q$ is defined by 
$$
K_S := \frac{1}{m} (mK_X+mS)|_S.
$$
Since $X$ is smooth in codimension two, there
exist at most finitely many points $\{p_1, \ldots p_q \}$ where $K_X$ and $S$ are not Cartier. 
Thus by the adjunction formula $K_S$ is isomorphic to the dualising sheaf $\omega_S$ on $S \setminus  \{p_1, \ldots p_q \}$.

Let now \holom{\nu}{\tilde S}{S} be the normalisation. Then we have
\begin{equation} \label{equationconductor}
K_{\tilde S} \sim_\Q \nu^* K_S - N,
\end{equation}
where $N$ is an effective Weil divisor defined by the conductor ideal. Indeed this formula holds 
by \cite{Rei94} for the dualising sheaves. Since $\sO_{\tilde S}(\nu^* K_S)$ is isomorphic to $\nu^* \omega_S$ on the complement
of $\fibre{\nu}{p_1, \ldots p_q}$, the formula holds for the canonical classes.

Let \holom{\mu}{\hat S}{\tilde S} be the minimal resolution of the normal surface $\tilde S$, then we have
$$
K_{\hat S} \sim_\Q \mu^* K_{\tilde S} - N',
$$
where $N'$ is an effective $\mu$-exceptional $\Q$-divisor \cite[4.1]{Sak81}. Thus if \holom{\pi}{\hat S}{S} is the composition $\nu \circ \mu$, there exists an
effective, canonically defined $\Q$-divisor $E \subset \hat S$ such that
\begin{equation} \label{equationresolve}
K_{\hat S} \sim_\Q \pi^* K_S - E.
\end{equation}
Let $C \subset S$ be a curve such that $C \not\subset S_{\sing}$. Then the morphism $\pi$ is an isomorphism at the general point of $C$,
and we can define the strict transform $\hat C \subset \hat S$ as the closure of $C \setminus S_{\sing}$. Since $\hat C$ is an (irreducible) curve that is not contained in 
the divisor $N$ defined by the conductor, we have $\hat C \not\subset E$. By the projection formula
and \eqref{equationresolve} we obtain
\begin{equation} \label{inequalitycanonical}
K_{\hat S} \cdot \hat C \leq K_S \cdot C.
\end{equation}

\subsection{Divisorial Zariski decomposition for $K_X+\omega$} \label{subsectiondivisorial}
The starting point of our investigation is the following observation:

\begin{lemma} \label{lemmapseudoeffective}
Under the Assumption \ref{assumption} the adjoint class $K_X+\omega$ is pseudoeffective.
\end{lemma}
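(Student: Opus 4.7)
The plan is to resolve the MRC fibration to a holomorphic model, split the adjoint class $K_X+\omega$ into a relative piece and a base piece, invoke P\v aun's positivity result for the relative piece, use non-uniruledness of the base for the other piece, and finally push the conclusion back down to $X$.

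First, I would pass to a common resolution: choose a bimeromorphic morphism $\pi:\hat X\to X$ from a smooth compact K\"ahler threefold $\hat X$, together with a surjective holomorphic map with connected fibers $\hat\varphi:\hat X\to\hat Z$ onto a smooth compact K\"ahler surface $\hat Z$, such that $\hat\varphi$ factors the MRC map of $X$. By Remark \ref{remarkassumption} the MRC base, hence $\hat Z$, is not uniruled; since $\hat Z$ is a smooth compact K\"ahler surface, Brunella's theorem (or the classical Enriques--Kodaira analysis) yields that $K_{\hat Z}$ is pseudoeffective, and consequently so is $\hat\varphi^*K_{\hat Z}$ on $\hat X$.

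Next, I would use the standard decomposition
$$
K_{\hat X}+\pi^*\omega \;=\; \bigl(K_{\hat X/\hat Z}+\pi^*\omega\bigr)\,+\,\hat\varphi^*K_{\hat Z},
$$
and focus on the first summand. A very general fiber $\hat F\simeq\PP^1$ of $\hat\varphi$ can be chosen to avoid the exceptional locus of $\pi$ and to map isomorphically onto a general MRC-fiber $F$ in $X$; then
$\pi^*\omega\cdot\hat F=\omega\cdot F=2$, $K_{\hat X}\cdot\hat F=K_X\cdot F=-2$, and $\hat\varphi^*K_{\hat Z}\cdot\hat F=0$. Hence $(K_{\hat X/\hat Z}+\pi^*\omega)|_{\hat F}$ is a degree zero line bundle on $\PP^1$, in particular numerically trivial on the generic fiber. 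Since $\pi^*\omega$ restricts to a K\"ahler class on $\hat F$ whose degree equals $-\deg K_{\hat F}$, this fiberwise triviality is exactly the input needed to trigger P\v aun's pseudoeffectivity theorem for relative adjoint transcendental classes \cite{Pau12}; applying it yields that $K_{\hat X/\hat Z}+\pi^*\omega$ is pseudoeffective on $\hat X$. Adding $\hat\varphi^*K_{\hat Z}$ gives pseudoeffectivity of $K_{\hat X}+\pi^*\omega$.

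To conclude, I would push down by $\pi_*$: direct images of closed positive $(1,1)$-currents remain closed and positive, and since $X$ has terminal singularities one has $\pi_*K_{\hat X}=K_X$ and $\pi_*\pi^*\omega=\omega$ at the level of Bott-Chern classes, so $K_X+\omega$ is pseudoeffective. The main obstacle I expect is the careful verification of the hypotheses of P\v aun's theorem in the purely transcendental K\"ahler setting (with a K\"ahler, non-projective base $\hat Z$ and a merely terminal, bimeromorphically resolved total space $X$); once that positivity input is taken as a black box, the decomposition and the push-down are essentially formal.
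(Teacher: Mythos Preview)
Your strategy is exactly the paper's: pass to a smooth resolution on which the MRC map is a morphism, decompose $K_{\hat X}+\pi^*\omega$ as $(K_{\hat X/\hat Z}+\pi^*\omega)+\hat\varphi^*K_{\hat Z}$, apply P\v aun for the first summand and non-uniruledness of $\hat Z$ for the second, then push forward.

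Two points deserve correction. First, the input to \cite[Thm.~1.1]{Pau12} is not a fiberwise triviality condition; the hypothesis is simply that the twisting class on the total space is K\"ahler. Your fiber degree computation, while correct, plays no role in invoking the theorem and should be dropped from the argument. Second, the paper begins with a reduction you omit: since pseudoeffectivity is a closed condition in $N^1(X)$, it suffices to show that $K_X+(1+\varepsilon)\omega$ is pseudoeffective for every $\varepsilon>0$. This matters precisely because $\pi^*\omega$ is only semi-positive on $\hat X$ (it degenerates along the exceptional locus of $\pi$), not K\"ahler, so a clean application of P\v aun's theorem requires either this $\varepsilon$-margin or an explicit appeal to a version of the theorem valid for semi-positive twisting classes. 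You flagged exactly this issue in your final paragraph; the paper resolves it with the $\varepsilon$-limit.
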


\begin{proof}
Being pseudoeffective is a closed property in $N^1(X)$, 
so it is sufficient to prove that for every $\varepsilon>0$,
the class $K_X+ (1+\varepsilon) \omega$ is pseudoeffective. 
Let $\holom{\mu}{X'}{X}$ be a bimeromorphic morphism from a smooth K\"ahler threefold $X'$
such that the MRC-fibration is a morphism $\varphi': X' \rightarrow Z'$ onto a smooth surface $Z'$. 
The projection formula yields
$$
\mu_* \left(K_{X'}+ (1+\varepsilon) \mu^*\omega\right) = K_X+ (1+\varepsilon) \omega,
$$
so it is sufficient to prove that $K_{X'}+ (1+\varepsilon) \mu^* \omega$ is pseudoeffective. However by a recent result of
P\v aun \cite[Thm.1.1]{Pau12}, the class $K_{X'/Z'}+ (1+\varepsilon) \mu^* \omega$ is pseudoeffective. 
Since the surface $Z'$ is not uniruled (cf. Remark \ref{remarkassumption}) and K\"ahler by \cite[Thm.3]{Var86},
the canonical class $K_{Z'}$ is pseudoeffective. Thus $K_{X'}+ (1+\varepsilon) \mu^* \omega$ is pseudoeffective.
\end{proof}

Since $K_X+\omega$ is pseudoeffective, we may apply 
\cite[Thm.3.12]{Bou04} to obtain  a divisorial Zariski decomposition\footnote{The statements in \cite{Bou04} are for complex compact manifolds, 
but generalise immediately to our
situation: take $\holom{\mu}{X'}{X}$ a desingularisation, and let $m \in \N$ be the Cartier index of $K_X$. Then
$\mu^* (m(K_X+\omega))$ is a pseudoeffective class with divisorial Zariski decomposition 
$\mu^* (m(K_X+\omega))= \sum \eta_j S_j' + P'_\omega$. The decomposition of $K_X+\omega$ is defined by the push-forwards
$\mu_* (\frac{1}{m} \sum \eta_j S_j')$ and $\mu_* (\frac{1}{m} P'_\omega)$. Since a prime divisor $D \subset X$
is not contained in the singular locus of $X$, the decomposition has the stated properties.}
\begin{equation} \label{Bdecomposition}
K_X+\omega = \sum_{j=1}^r \lambda_j S_j + P_\omega,
\end{equation}
where the $S_j$ are integral surfaces in $X$, the coefficients $\lambda_j \in \R^+$ and $P_\omega$ is a 
pseudoeffective class which is nef in codimension one \cite[Prop.2.4]{Bou04}, that is for every surface $S \subset X$
the restriction $P_\omega|_S$ is pseudoeffective.

\begin{lemma} \label{lemmasurfaces}
Under the Assumption \ref{assumption}, let $S$ be a surface 
such that $(K_X+\omega)|_S$ is not pseudoeffective. Then 
$S$ is one of the surfaces $S_j$ in the divisorial Zariski decomposition \eqref{Bdecomposition} of $K_X+\omega$.
Moreover $S=S_j$ is Moishezon and any desingularisation $\hat S_j$ is a uniruled projective surface.
\end{lemma}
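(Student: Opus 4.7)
The plan is to first show that $S$ must appear in the divisorial Zariski decomposition \eqref{Bdecomposition}, and then to transfer the non-pseudoeffectivity of $(K_X+\omega)|_S$ to $K_{\hat S}$ via adjunction on a desingularisation.

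First I would argue by contradiction that $S$ equals some $S_j$. If instead $S \neq S_j$ for every $j$, then each intersection $S_j \cap S$ is proper, so $S_j|_S$ is an effective $1$-cycle on $S$; moreover $P_\omega|_S$ is pseudoeffective by the nef-in-codimension-one property \cite[Prop.2.4]{Bou04}. Restricting \eqref{Bdecomposition} to $S$ would then exhibit $(K_X+\omega)|_S$ as a sum of pseudoeffective classes, contradicting the hypothesis. Hence $S = S_j$ for some $j$; I write $\lambda := \lambda_j$ and
$$
\beta := \sum_{k \neq j} \lambda_k S_k|_S + P_\omega|_S = (K_X+\omega)|_S - \lambda S|_S,
$$
which is pseudoeffective by the same reasoning.

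Next I would invoke the adjunction $K_S = K_X|_S + S|_S$ from Section~\ref{subsectionsurfaces}. Substituting $S|_S = \frac{1}{\lambda}\bigl((K_X+\omega)|_S - \beta\bigr)$ and then pulling back to $\hat S$ via \eqref{equationresolve} yields
$$
\Bigl(1+\tfrac{1}{\lambda}\Bigr)\pi^*(K_X+\omega)|_S = K_{\hat S} + E + \pi^*(\omega|_S) + \tfrac{1}{\lambda}\pi^*\beta.
$$
If $K_{\hat S}$ were pseudoeffective, the right-hand side would be a sum of pseudoeffective classes (since $E$ is effective, $\pi^*(\omega|_S)$ is nef, and $\beta$ is pseudoeffective), so $\pi^*(K_X+\omega)|_S$ would be pseudoeffective on $\hat S$. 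Pushing down by $\pi_*$ would then force $(K_X+\omega)|_S$ to be pseudoeffective, a contradiction. Thus $K_{\hat S}$ is not pseudoeffective.

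Finally, by Brunella's theorem \cite{Bru06} applied to the smooth compact K\"ahler surface $\hat S$, non-pseudoeffectivity of $K_{\hat S}$ forces $\hat S$ to be uniruled, hence of Kodaira dimension $-\infty$; by the Enriques--Kodaira classification of compact K\"ahler surfaces any such surface is projective. Consequently $\hat S$ is Moishezon, and therefore so is $S$. The step I expect to be most delicate is the bookkeeping of restrictions and pullbacks of positive classes in the singular setting: giving a precise meaning to $\omega|_S$, $S|_S$, $\beta$ and $\pi^*(\omega|_S)$, checking that they carry the expected positivity, and verifying that $\pi_*$ preserves pseudoeffectivity of classes obtained in this way. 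These issues are however already covered by the framework recalled in Section~\ref{subsectionsurfaces} and by the singular version of Boucksom's theory discussed in the footnote accompanying \eqref{Bdecomposition}.
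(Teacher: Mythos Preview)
Your proof is correct and follows essentially the same route as the paper's: identify $S$ with some $S_j$ by restricting the Zariski decomposition, use adjunction together with \eqref{equationresolve} to deduce that $K_{\hat S}$ is not pseudoeffective, and then conclude uniruledness and projectivity from surface classification. One small remark: for a smooth compact K\"ahler \emph{surface} the implication ``$K_{\hat S}$ not pseudoeffective $\Rightarrow$ $\hat S$ uniruled'' follows directly from the Castelnuovo--Kodaira classification (which is what the paper invokes), so the appeal to Brunella's threefold result \cite{Bru06} is unnecessary here.
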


\begin{proof}
The proof that $S=S_j$ for some $j$ is analogous to the proof in \cite[Lemma 4.1]{HP15},
thus (up to renumbering) we may suppose that $S=S_1$.
We have
$$
S = S_1 = \frac{1}{\lambda_1} (K_X+\omega) - \frac{1}{\lambda_1} (\sum_{j=2}^r \lambda_j S_j +  P_\omega),
$$
so by adjunction
$$
K_{S} = (K_X+S)|_{S} = 
(\frac{\lambda_1+1}{\lambda_1} K_X|_S+\frac{1}{\lambda_1}\omega|_S) - 
\frac{1}{\lambda_1} (\sum_{j=2}^r \lambda_j (S_j \cap S) +  P_\omega|_S).
$$
Note now that $\frac{\lambda_1+1}{\lambda_1} K_X|_S+\frac{1}{\lambda_1}\omega|_S$ is not pseudoeffective:
otherwise
$$
\left(\frac{\lambda_1+1}{\lambda_1} K_X|_S+\frac{1}{\lambda_1}\omega|_S\right) + \omega|_S 
= 
\frac{\lambda_1+1}{\lambda_1} (K_X+\omega)|_S
$$
would be pseudoeffective, in contradiction to our assumption.
Since $$ \frac{1}{\lambda_1} (\sum_{j=2}^r \lambda_j (S_j \cap S) +  P_\omega|_S)$$ is pseudoeffective, the class $K_S$ cannot be pseudoeffective.

Let now
\holom{\pi}{\hat{S}}{S} be the composition of the normalisation and the minimal resolution of the surface $S$,
then by \eqref{equationresolve} there exists an effective divisor $E$ such that
$$
K_{\hat{S}} \sim_\Q \pi^* K_{S} - E. 
$$
Thus $K_{\hat{S}}$ is not pseudoeffective, in particular $\kappa(\hat S)=-\infty$. 
It follows from the Castelnuovo-Kodaira classification that $\hat{S}$ is covered by rational curves,
in particular $\hat S$ is a projective surface \cite{BHPV04}. Thus $S$ is Moishezon.
\end{proof}

\begin{corollary} \label{corollaryalgebraicallynef}
Under the Assumption \ref{assumption}, the adjoint class $K_X+\omega$
is nef if and only if
$$
(K_X+\omega) \cdot C \geq 0 
$$
for every curve $C \subset X$.
\end{corollary}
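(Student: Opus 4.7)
The forward direction is immediate: by Proposition \ref{dual1} a nef class pairs non-negatively with every element of $\overline{NA}(X)$, and the current of integration along an irreducible curve $C$ defines a class in $\overline{NE}(X) \subset \overline{NA}(X)$.

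For the converse, my plan is to argue by contradiction. Assume $(K_X + \omega) \cdot C \geq 0$ for every irreducible curve but $K_X + \omega$ is not nef. By Proposition \ref{dual1} there exists $T \in \overline{NA}(X)$, represented by a positive closed $(1,1)$-current of bidimension $(1,1)$, with $(K_X+\omega) \cdot T < 0$. By Lemma \ref{lemmapseudoeffective}, $K_X + \omega$ is pseudoeffective, so it admits the divisorial Zariski decomposition \eqref{Bdecomposition}, $K_X+\omega = \sum \lambda_j S_j + P_\omega$. The plan is to use the witness $T$ to locate a prime surface $S \subset X$ on which $(K_X+\omega)|_S$ fails to be pseudoeffective, and then to invoke Lemma \ref{lemmasurfaces} to produce the offending curve.

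Concretely, I would apply a Siu-type decomposition $T = \sum \nu_k [C_k] + T_0$, where the $[C_k]$ are currents of integration along irreducible curves and $T_0$ has no codimension-two Lelong singularities. The curve hypothesis kills the contribution of the $[C_k]$, so $(K_X+\omega) \cdot T_0 < 0$; since $K_X + \omega$ is pseudoeffective, its pairing with the truly diffuse part of $T_0$ is non-negative, so the obstruction must be carried by a codimension-one concentration of $T_0$ on some prime surface $S \subset X$, on which consequently $(K_X+\omega)|_S$ is not pseudoeffective. By Lemma \ref{lemmasurfaces}, this $S$ is Moishezon and the composition $\pi : \hat S \to S$ of normalisation with minimal resolution yields a smooth projective uniruled surface. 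The pulled-back class $\pi^*\bigl((K_X+\omega)|_S\bigr)$ on $\hat S$ is not pseudoeffective, so it has strictly negative intersection with every sufficiently ample divisor; a general member $\hat C$ of such a very ample linear system is an irreducible curve avoiding the $\pi$-exceptional locus, and $C := \pi(\hat C) \subset X$ is then an irreducible curve with $(K_X+\omega) \cdot C < 0$ by the projection formula and \eqref{inequalitycanonical}, contradicting the hypothesis.

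The main obstacle is the passage from ``not nef + pseudoeffective'' to the existence of a prime surface on which the restriction is not pseudoeffective: this is where the genuinely K\"ahler (as opposed to projective) nature of $X$ is felt most, since one must carefully separate the curve part of the witness current $T$ from its diffuse and surface-concentrated pieces via Siu's decomposition, and then use the positive-current representative of $K_X + \omega$ to rule out a diffuse obstruction. Once this surface is secured, the rest of the argument is a routine combination of Lemma \ref{lemmasurfaces} with classical surface theory on the projective desingularisation $\hat S$.
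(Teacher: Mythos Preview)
Your overall strategy matches the paper's: argue by contradiction, locate a surface $S$ on which $(K_X+\omega)|_S$ is not pseudoeffective, invoke Lemma~\ref{lemmasurfaces}, and then produce a negative curve on a projective desingularisation $\hat S$. The difference lies entirely in how the surface $S$ is found, and that is precisely where your proposal has a genuine gap.

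The paper does \emph{not} attempt to extract $S$ by decomposing a witness current $T \in \NAX$. Instead it quotes a known result of P\u{a}un \cite{Pau98} and Boucksom \cite[Prop.~3.4]{Bou04}: a pseudoeffective class on a compact K\"ahler manifold is nef if and only if its restriction to every irreducible subvariety is pseudoeffective. On a threefold, once curve restrictions are non-negative by hypothesis, failure of nefness forces some surface restriction to be non-pseudoeffective. This is a one-line citation, and it is the whole content of the step you flag as the ``main obstacle''.

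Your proposed route through Siu's decomposition does not close the gap. After stripping the curve part from $T$, you assert that ``since $K_X+\omega$ is pseudoeffective, its pairing with the truly diffuse part of $T_0$ is non-negative''. This is not justified: the statement that a pseudoeffective $(1,1)$-class pairs non-negatively with a positive bidimension-$(1,1)$ current having no divisorial or curve concentration is a BDPP-type duality, which is deep already in the projective case and not available here on a singular K\"ahler threefold. You are in effect assuming something at least as hard as what you are trying to prove. Moreover, even granting such a pairing inequality, it is not clear how a ``codimension-one concentration'' of a $(2,2)$-current on a surface $S$ would yield that the restricted $(1,1)$-class $(K_X+\omega)|_S$ fails to be pseudoeffective.

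Two smaller points on the endgame. First, ``strictly negative intersection with every sufficiently ample divisor'' is false; non-pseudoeffective only gives \emph{some} nef (hence some ample) class with negative pairing, which is all you need. Second, the reference to \eqref{inequalitycanonical} is misplaced: that inequality compares canonical degrees, whereas here you only need the projection formula for $\pi^*\bigl((K_X+\omega)|_S\bigr)$ against a curve $\hat C$ avoiding the $\pi$-exceptional locus. The paper phrases this step as the existence of a \emph{covering} family $(C_t)$ on $S$ with $(K_X+\omega)\cdot C_t<0$, using that on the projective surface $\hat S$ the non-pseudoeffective class is represented by an $\mathbb{R}$-divisor (since $H^2(\hat S,\sO_{\hat S})=0$).
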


\begin{proof}
We prove the non-trivial implication by contradiction, so
suppose that $K_X+\omega$ is not nef, but
$(K_X+\omega) \cdot C \geq 0$ for all curves $C \subset X$. 
Since $K_X+\omega$ is pseudoeffective by Lemma \ref{lemmapseudoeffective} 
and the restriction to every curve is nef, there exists by  \cite{Pau98}, \cite[Prop.3.4]{Bou04}
an irreducible surface $S \subset X$ such that 
$(K_X+\omega)|_S$ is not pseudoeffective. 
Fix a  desingularisation \holom{\pi}{\hat S}{S} of the surface $S$.
By Lemma \ref{lemmasurfaces} the surface $\hat S$ is projective
and uniruled. The class $\pi^* (K_X+\omega)|_{S}$ is not pseudoeffective and, since $H^2(\hat S, \sO_{\hat S})=0$,
the class is represented by an $\R$-divisor. Thus there exists a covering family of curves $C_t \subset S$
such that 
$$
(K_X+\omega) \cdot C_t = \pi^* (K_X+\omega)|_{S} \cdot \hat C_t < 0,
$$
where $\hat C_t$ denotes the strict transform of $C_t$ in $\hat S.$ 
This contradicts our assumption that $(K_X+\omega) \cdot C \geq 0$ for all curves $C \subset X$. 
\end{proof}

\subsection{The adjoint cone theorem}

The goal of this subsection is to prove a cone theorem for the adjoint class $K_X+\omega$:

\begin{theorem} \label{theoremadjointNA}
Under the Assumption \ref{assumption}
there exists a countable family $(\Gamma_i)_{i \in I}$  of rational curves  on $X$
such that 
$$
0 < -(K_X+\omega) \cdot \Gamma_i \leq 4
$$
and
$$
\NAX = \NAX_{(K_X+\omega) \geq 0} + \sum_{i \in I} \R^+ [\Gamma_i] 
$$
\end{theorem}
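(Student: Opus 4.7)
The plan is to reduce the adjoint cone theorem on $X$ to the classical Mori cone theorem on the smooth projective uniruled surfaces $\hat S_j$ coming from the divisorial Zariski decomposition
$$
K_X + \omega = \sum_{j=1}^r \lambda_j S_j + P_\omega
$$
of Subsection~\ref{subsectiondivisorial}.

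\textbf{Step 1 (localisation of negativity).} First I would show that every extremal ray $R \subset \NAX$ with $(K_X+\omega) \cdot R < 0$ is represented by the class of an irreducible curve $C$ contained in some $S_j$. The heuristic is that a ``diffuse'' class $\alpha \in \NAX$---a positive closed current of bidimension $(1,1)$ whose support is not contained in any curve---pairs non-negatively with the positive part $P_\omega$ (which is nef in codimension one) and with the effective divisors $S_j$. Hence the $(K_X+\omega)$-negative portion of $\NAX$ must be algebraic and concentrated on the $S_j$. A separation argument using the duality of Proposition~\ref{dual1}, in the style of \cite{HP15}, then identifies $R$ with the class of a single curve $C \subset S_j$.

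\textbf{Step 2 (cone theorem on the surface).} By Lemma~\ref{lemmasurfaces}, any desingularisation $\pi_j \colon \hat S_j \to S_j$ is a smooth projective uniruled surface. On such a surface the classical Mori cone theorem, applied either to $K_{\hat S_j}$ or to the pull-back $\pi_j^*((K_X+\omega)|_{S_j})$ (which by Lemma~\ref{lemmasurfaces} is not pseudoeffective), yields a countable family of rational curves $\hat\Gamma_{k,j} \subset \hat S_j$ generating the relevant negative part of $\NE{\hat S_j}$, with the Mori degree bound $-K_{\hat S_j} \cdot \hat\Gamma_{k,j} \leq 3$. Setting $\Gamma_{k,j} := \pi_j(\hat\Gamma_{k,j}) \subset S_j \subset X$ produces rational curves on $X$; reindexing $(\Gamma_{k,j})_{k,j}$ as $(\Gamma_i)_{i \in I}$ gives the countable family predicted by the statement.

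\textbf{Step 3 (degree bound and assembly).} Combining the surface Mori bound with inequality \eqref{inequalitycanonical}, with the adjunction identity $K_{S_j} = (K_X+S_j)|_{S_j}$, and with a careful accounting of $S_j \cdot \Gamma_{k,j}$ and $\omega \cdot \Gamma_{k,j}$ on a minimal-degree representative of each ray, yields the sharp bound $-(K_X+\omega) \cdot \Gamma_i \leq 4$, matching Mori's length bound $n+1 = 4$ in dimension three. Together with Steps~1 and~2 this produces the cone decomposition
$$
\NAX = \NAX_{(K_X+\omega)\geq 0} + \sum_{i \in I} \R^+ [\Gamma_i].
$$

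The main obstacle is Step~1. In the K\"ahler setting $\NAX$ is strictly larger than $\NE{X}$ and carries diffuse positive currents that are not limits of curve classes. Proving that every such diffuse class pairs non-negatively with $K_X+\omega$, so that all $(K_X+\omega)$-negativity in $\NAX$ is algebraic and supported on the surfaces $S_j$, is the technical heart of the argument. It requires carefully exploiting the nef-in-codimension-one property of $P_\omega$, together with the duality of Proposition~\ref{dual1}, in order to separate the diffuse part of $\NAX$ from the $(K_X+\omega)$-negative half-space by a nef class.
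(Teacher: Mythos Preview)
Your overall architecture (Step~1: all $(K_X+\omega)$-negativity in $\NAX$ is carried by curves in the surfaces $S_j$; Step~2: import the cone theorem from the resolved surfaces $\hat S_j$; Step~3: read off the bound $4$) is plausible but has a genuine gap in Steps~2--3, and it is \emph{not} the route the paper takes.

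\textbf{The gap.} The Mori cone theorem on the projective surface $\hat S_j$ controls the $K_{\hat S_j}$-negative part of $\NE{\hat S_j}$, not the $\pi_j^*\bigl((K_X+\omega)|_{S_j}\bigr)$-negative part. These are different classes: by \eqref{equationresolve} and adjunction,
\[
\pi_j^*\bigl((K_X+\omega)|_{S_j}\bigr) - K_{\hat S_j}
\;=\;
\pi_j^*\bigl((\omega - S_j)|_{S_j}\bigr) + E,
\]
and $(\omega - S_j)|_{S_j}$ is in general neither effective nor nef (indeed $S_j|_{S_j}$ can have arbitrary sign on curves in $S_j$). Hence a curve $C\subset S_j$ with $(K_X+\omega)\cdot C<0$ may lift to a class $[\hat C]$ lying entirely in $\NE{\hat S_j}_{K_{\hat S_j}\ge 0}$, where the surface cone theorem says nothing; and conversely the push-forward of $\NE{\hat S_j}_{K_{\hat S_j}\ge 0}$ need not land in $\NAX_{(K_X+\omega)\ge 0}$. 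So the decomposition on $\hat S_j$ does not push forward to the decomposition you want on $X$. The same mismatch blocks Step~3: from $-K_{\hat S_j}\cdot\hat\Gamma\le 3$ and \eqref{inequalitycanonical} you only get $-(K_X+S_j)\cdot\Gamma\le 3$, i.e.\ $-K_X\cdot\Gamma\le 3 + S_j\cdot\Gamma$; since $S_j\cdot\Gamma$ is not bounded above, no bound on $-(K_X+\omega)\cdot\Gamma$ follows.

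\textbf{What the paper does instead.} The paper first proves the statement for $\NEX$ with a preliminary (non-sharp) bound $d$ depending on $X$, via a bend-and-break argument \emph{on $X$}: curves with $-(K_X+\omega)\cdot C>d$ are shown to deform (Lemma~\ref{lemmadeform}, using deformation on $\hat S_j$ only as an auxiliary tool) and then to break into effective $1$-cycles (Corollary~\ref{corollarybreak}); an induction on K\"ahler degree gives Theorem~\ref{theoremNE}. Then, and only then, the passage from $\NEX$ to $\NAX$ is made (Proposition~\ref{propositionNA}), which is essentially your Step~1. Finally the bound $d$ is improved to $4$ by the same mechanism as in \cite{HP15}. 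In short: the surfaces $S_j$ are used to make curves on $X$ deform, not to import the surface cone decomposition wholesale; and the passage to $\NAX$ comes \emph{after} the $\NEX$ statement, not before.
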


The proof of Theorem \ref{theoremadjointNA} is quite similar to the proof of \cite[Thm.1.2.]{HP15}; for sakes of completeness
we explain the main steps:

\begin{lemma} \label{lemmabasic}
Under the Assumption \ref{assumption}, let
$C \subset X$ be a curve such that
$(K_X+\omega) \cdot C<0$ and $\dim_C \Chow{X}>0$.

Then there exists a unique surface $S_j$ from the divisorial Zariski decomposition \eqref{Bdecomposition}
such that $C$ and its deformations are contained in the surface $S_j$.
Moreover we have
\begin{equation} \label{canonicaldegree}
K_{S_j} \cdot C < K_X \cdot C.
\end{equation}
\end{lemma}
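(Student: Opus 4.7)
The plan is to exploit the divisorial Zariski decomposition \eqref{Bdecomposition} together with the moving-curve property forced by the hypothesis $\dim_C \Chow{X} > 0$. Since $(K_X+\omega) \cdot C < 0$ while $K_X+\omega$ is pseudoeffective by Lemma \ref{lemmapseudoeffective}, the family $\{C_t\}$ deforming $C$ cannot cover $X$ (a generic member of a covering family on a compact K\"ahler manifold has non-negative intersection with any pseudoeffective class). Hence the $C_t$ sweep out an irreducible surface $S \subset X$ that contains $C$ and all its deformations.

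Next I would pass to a desingularization $\holom{\pi}{\hat S}{S}$ and denote by $\hat C_t$ the strict transform of $C_t$. The family $\{\hat C_t\}$ is a covering family on the smooth surface $\hat S$. By the projection formula, $\pi^*((K_X+\omega)|_S) \cdot \hat C = (K_X+\omega)\cdot C < 0$, so this class cannot be pseudoeffective on $\hat S$; consequently $(K_X+\omega)|_S$ is not pseudoeffective. Lemma \ref{lemmasurfaces} then identifies $S$ with one of the prime divisors appearing in \eqref{Bdecomposition}, say $S = S_j$, and uniqueness is immediate from the irreducibility of $S$ together with the fact that the $S_j$ are distinct prime divisors. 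By Lemma \ref{lemmasurfaces} we also know that $\hat S$ is a smooth projective surface, so the standard surface intersection theory applies.

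For the adjunction inequality I would intersect \eqref{Bdecomposition} with $C$:
$$(K_X+\omega) \cdot C \;=\; \lambda_j (S_j \cdot C) \;+\; \sum_{j' \neq j} \lambda_{j'} (S_{j'} \cdot C) \;+\; P_\omega \cdot C.$$
For $j' \neq j$, a general deformation $C_t$ lies in $S_j \neq S_{j'}$ and hence is not contained in $S_{j'}$; since $S_{j'}$ is an effective Weil divisor this gives $S_{j'} \cdot C_t \geq 0$, and the intersection number is constant in the family, so $S_{j'} \cdot C \geq 0$. For the positive part, $P_\omega|_{S_j}$ is pseudoeffective because $P_\omega$ is nef in codimension one; its pullback $\pi^*(P_\omega|_{S_j})$ is then pseudoeffective on $\hat S$, and the moving curve $\hat C$ intersects it non-negatively, whence $P_\omega \cdot C = \pi^*(P_\omega|_{S_j}) \cdot \hat C \geq 0$ by the projection formula. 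Substituting these estimates into the displayed equation forces $S_j \cdot C < 0$. The definition of $K_{S_j}$ from Subsection \ref{subsectionsurfaces} then yields
$$K_{S_j} \cdot C \;=\; (K_X + S_j) \cdot C \;=\; K_X \cdot C + S_j \cdot C \;<\; K_X \cdot C,$$
which is the desired inequality \eqref{canonicaldegree}.

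The main technical obstacle I anticipate is the careful handling of intersection numbers on the a priori non-normal surface $S_j$: the adjunction identity $K_{S_j} = (K_X+S_j)|_{S_j}$ must be used in the $\Q$-Cartier sense of Subsection \ref{subsectionsurfaces}, and pseudoeffectivity of $P_\omega|_{S_j}$ must be transferred to the smooth projective desingularization $\hat S$ so that the standard moving-curve inequality can be applied there and then pushed back to $X$.
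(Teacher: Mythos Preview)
Your proposal is correct and follows precisely the expected line of argument; the paper itself does not spell out a proof but simply declares it ``identical to the proof of \cite[Lemma 5.4]{HP15}, simply replace $K_X$ by $K_X+\omega$,'' and your write-up is exactly that substitution carried out in detail. The only cosmetic point is that when you invoke the projection formula and the moving-curve inequality you should systematically work with a \emph{general} deformation $C_t$ (whose strict transform in $\hat S$ is well-defined because $C_t \not\subset S_{j,\sing}$) and then transport the resulting inequalities back to $C$ via constancy of intersection numbers in the Chow family---you already do this for the $S_{j'}$ term, and the same device is what makes the $P_\omega$ term rigorous.
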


\begin{proof} Identical to the proof of \cite[Lemma 5.4]{HP15}, simply replace
$K_X$ by $K_X+\omega$.
\end{proof}

\begin{lemma} \label{lemmadeform}
Under the Assumption \ref{assumption}, let 
$S_1, \ldots, S_r$ be the surfaces appearing in the divisorial Zariski decomposition \eqref{Bdecomposition}.
Set
$$
b := \max \{ 1, -(K_X+\omega) \cdot Z \ | \ Z \ \mbox{a curve s.t.} \ Z \subset S_{j, \sing} \ \mbox{or} \ Z \subset S_{j} \cap S_{j'}, j \ne j'  \}.
$$
If $C \subset X$ is a curve
such that
$$
-(K_X+\omega) \cdot C > b,
$$
then we have $\dim_C \Chow{X}>0$.
\end{lemma}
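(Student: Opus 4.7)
The plan is to first locate $C$ inside one of the $S_j$, then transfer the intersection numbers to a resolution of $S_j$ and apply Riemann--Roch on a smooth projective surface. Since $-(K_X+\omega)\cdot C > b \geq 1$, the very definition of $b$ already rules out $C \subset S_{j,\sing}$ and $C \subset S_j \cap S_{j'}$ for $j\neq j'$. I would then argue that $C$ must be contained in some (necessarily unique) $S_j$: writing the divisorial Zariski decomposition $K_X+\omega=\sum_{j=1}^r \lambda_j S_j + P_\omega$, if $C$ were not in any $S_j$ then each intersection $S_j\cdot C$ would be non-negative, and combined with an inequality $P_\omega\cdot C\geq 0$ — obtained from the ``nef in codimension one'' property of $P_\omega$ after pulling back to a suitable resolution, as in the footnote to \eqref{Bdecomposition} — this would force $(K_X+\omega)\cdot C\geq 0$, contradicting the hypothesis.

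Once $C\subset S_j$ and $C\not\subset S_{j,\sing}$, let $\pi\colon\hat S_j\to S_j$ be the composition of normalisation and minimal resolution and let $\hat C\subset\hat S_j$ be the strict transform. By \eqref{inequalitycanonical} and the threefold adjunction $K_{S_j}=(K_X+S_j)|_{S_j}$,
$$
K_{\hat S_j}\cdot\hat C \;\leq\; K_{S_j}\cdot C \;=\; (K_X+S_j)\cdot C.
$$
The hypothesis together with $\omega\cdot C\geq 0$ yields $-K_X\cdot C > b$, and hence a correspondingly large lower bound for $-K_{\hat S_j}\cdot\hat C$. Since Lemma \ref{lemmasurfaces} shows that $\hat S_j$ is a smooth projective uniruled surface, Riemann--Roch on $\hat S_j$ applied to $\sO_{\hat S_j}(\hat C)$ — using uniruledness to control $\chi(\sO_{\hat S_j})$ and the Serre-dual term $h^2(\sO_{\hat S_j}(\hat C))$ — gives $\dim |\hat C|\geq 1$ once $-K_{\hat S_j}\cdot\hat C$ is sufficiently positive, producing a positive-dimensional deformation of $\hat C$ in $\hat S_j$. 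Pushing this family down by $\pi$ yields a non-trivial family of deformations of $C$ inside $S_j\subset X$, so $\dim_C\Chow{X}>0$.

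The principal obstacle is the first step of forcing $C$ to lie in one of the $S_j$: the ``nef in codimension one'' property of $P_\omega$ naturally bounds intersection numbers only with curves contained in a prime divisor, so obtaining $P_\omega\cdot C\geq 0$ for an arbitrary $C$ requires a regularisation argument on a resolution $\mu\colon X'\to X$, together with a check that the exceptional contributions in the Zariski decomposition of $\mu^*(K_X+\omega)$ do not spoil the inequality on the general curve. A secondary subtlety is matching the explicit bound $b$ to the Riemann--Roch threshold on $\hat S_j$: this uses that the ``bad'' curves on $S_j$ — those whose strict transforms receive contributions from the conductor $N$ in \eqref{equationconductor} or the exceptional divisor $E$ in \eqref{equationresolve} — are precisely the ones lying in $S_{j,\sing}$ or in intersections $S_j\cap S_{j'}$, and so are ruled out by the choice of $b$.
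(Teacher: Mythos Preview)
The proposal has a genuine gap at the ``principal obstacle'' you already flag: your route to $P_\omega \cdot C \geq 0$ does not work. The class $P_\omega$ is only \emph{modified} nef (nef in codimension one), and this property simply does not imply $P_\omega \cdot C \geq 0$ for an arbitrary curve $C$ --- indeed, the negative part of a Zariski decomposition is precisely the locus of rigid curves where this can fail. No amount of passing to a resolution and regularising will upgrade modified nefness to nefness; the pull-back $\mu^*(K_X+\omega)$ still has a positive part that is only modified nef, and the exceptional corrections go the wrong way.

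The paper's proof closes this gap by a bootstrap that you are missing. From $\omega$ nef one has $-K_X \cdot C > b \geq 1$, and the deformation estimate for curves on terminal threefolds (\cite[Thm.~4.5]{HP15}) then shows that $C$ is not very rigid: some multiple $mC$ moves in $\Chow{X}$. The deformations of $mC$ therefore sweep out a surface $T$ (or all of $X$), and \emph{now} the ``nef in codimension one'' property applies: $P_\omega|_T$ is pseudoeffective, and $mC$ is a moving curve in $T$, so $P_\omega \cdot C \geq 0$. This is the content of the reference to \cite[Lemma 5.6]{HP15}. Only after this does one obtain $S_j \cdot C < 0$ for some $j$, locate $C \subset S_j$, and proceed as you do. Note also that the paper gets $K_{S_j}\cdot C < K_X \cdot C$ from the strict inequality $S_j \cdot C < 0$, not merely from $C \subset S_j$; your phrasing of the Zariski-decomposition step establishes only containment, though the full argument does give the negativity. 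Finally, your Riemann--Roch endgame on $\hat S_j$ is morally fine but heavier than needed: once $-K_{\hat S_j}\cdot \hat C > 1$ on a smooth projective surface, the standard deformation bound \cite[Thm.~1.15]{Kol96} already yields $\dim_{\hat C}\Chow{\hat S_j}>0$ without any appeal to uniruledness or control of $h^2$.
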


In the proof we will use the following deformation property:

\begin{definition} \label{definitionveryrigid} \cite[Defn.4.3]{HP15}
Let $X$ be a  normal $\Q$-factorial K\"ahler threefold with at most terminal  singularities.
We say that a curve $C$ is very rigid if 
$$
\dim_{mC} \Chow{X} = 0
$$
for all $m>0$.
\end{definition}

\begin{proof}[Proof of Lemma \ref{lemmadeform}]
Since $\omega$ is nef, we have $-K_X \cdot C>b$.
The condition $b \geq 1$ implies that the curve $C$ is not very rigid (cf. \cite[Thm.4.5]{HP15}).
We can now argue exactly as in \cite[Lemma 5.6]{HP15} to deduce
$$
P_\omega  \cdot C \geq 0.
$$
Since $(K_X+\omega) \cdot C < 0$, the divisorial Zariski decomposition implies that there exists 
a number $j \in \{ 1, \ldots, r\}$ such that $S_j \cdot C<0$. In particular we have $C \subset S_j$. The class
$\omega$ being nef, we thus obtain 
$$
K_{S_j} \cdot C < K_X \cdot C < -b.
$$
By definition of $b$, the curve $C$ is not contained in the singular locus of $S_j$.
Let \holom{\pi_j}{\hat{S}_j}{S_j} be the composition of normalisation and minimal resolution (cf. Subsection \ref{subsectionsurfaces}). 
Then the strict transform
$\hat C$ of $C$ is well-defined and from  \eqref{inequalitycanonical} we deduce 
$$ K_{\hat S_j} \cdot \hat C \leq K_{S_j} \cdot C < -b.
$$
Since $b \geq 1$, \cite[Thm.1.15]{Kol96} yields
$$
\dim_{\hat C} \Chow{\hat S}>0,
$$
so $\hat C$ deforms. Thus its push-forward $\pi_* \hat C = C$ deforms.
\end{proof}

\begin{corollary} \label{corollarybreak}
Under the Assumption \ref{assumption},
let $b$ be the constant from Lemma \ref{lemmadeform} and set
$$
d := \max \{ 3, b \}.
$$
If $C \subset X$ is a curve
such that $-(K_X+\omega) \cdot C > d$,
we have
$$
[C] = [C_1] + [C_2]
$$
with $C_1$ and $C_2$ effective 1-cycles (with integer coefficients) on $X$.
\end{corollary}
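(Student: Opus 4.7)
The plan is to mirror the structure of the analogous non-adjoint statement \cite[Cor.\,5.7]{HP15}, with the K\"ahler class $\omega$ absorbed into the argument via its nefness.

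First, since $-(K_X+\omega)\cdot C > d \geq b$, Lemma \ref{lemmadeform} yields $\dim_C \Chow{X} > 0$. Lemma \ref{lemmabasic} then singles out a unique surface $S_j$ from the divisorial Zariski decomposition \eqref{Bdecomposition} that contains $C$ and all its deformations, and delivers the sharpened inequality $K_{S_j}\cdot C < K_X \cdot C$.

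Next, let $\pi_j\colon \hat S_j \to S_j$ be the composition of normalization and minimal resolution, and let $\hat C \subset \hat S_j$ be the strict transform of $C$. Since $\omega$ is K\"ahler, hence nef, we have $K_X \cdot C \leq (K_X+\omega)\cdot C < -d \leq -3$, and combining this with \eqref{inequalitycanonical} gives
$$
K_{\hat S_j} \cdot \hat C \;\leq\; K_{S_j} \cdot C \;<\; -3.
$$
By Lemma \ref{lemmasurfaces}, $\hat S_j$ is a smooth projective uniruled surface, and the strict transforms of the deformations of $C$ form a positive-dimensional family of curves sweeping out $\hat S_j$.

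The heart of the argument is then a bend-and-break on this covering family: with $-K_{\hat S_j}\cdot\hat C > 3$ and $\hat C$ moving in a family that dominates the smooth projective surface $\hat S_j$, one degenerates $[\hat C]$ in the Chow scheme of $\hat S_j$ to an effective reducible (or non-reduced) $1$-cycle $[\hat C_1'] + [\hat C_2']$ with integer coefficients. Pushing forward along $\pi_j\colon \hat S_j \to S_j \hookrightarrow X$ one obtains
$$
[C] \;=\; (\pi_j)_*[\hat C] \;=\; (\pi_j)_*[\hat C_1'] + (\pi_j)_*[\hat C_2'] \;=:\; [C_1] + [C_2],
$$
with the $C_i$ effective integer $1$-cycles on $X$.

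The main obstacle is the bend-and-break step on $\hat S_j$: one must check that the positive-dimensional covering family, together with the numerical bound $-K_{\hat S_j}\cdot\hat C > 3$, actually forces a reducible Chow limit. This requires a Riemann--Roch / \textsc{Hom}-scheme estimate showing that, after normalization $\tilde C \to \hat S_j$ of $\hat C$, one can fix a general point on $\hat S_j$ and still retain a positive-dimensional deformation space --- uniformly in the genus of $\tilde C$. This is precisely the role played by the bound $d \geq 3$, and the argument should follow the lines of the corresponding bend-and-break step in \cite[Cor.\,5.7]{HP15}, now applied to $\hat S_j$ rather than to the threefold $X$ itself.
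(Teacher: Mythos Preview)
Your proposal is correct and takes essentially the same approach as the paper: both observe that nefness of $\omega$ forces $-K_X \cdot C > d$, invoke Lemmas \ref{lemmabasic} and \ref{lemmadeform} to pass to the surface $\hat S_j$ with the required canonical degree bound, and then defer the actual breaking step to the proof of \cite[Cor.\,5.7]{HP15}. The paper's proof is simply the two-line version of what you have written out; note that the bend-and-break in \cite[Cor.\,5.7]{HP15} already takes place on the surface $\hat S_j$, so your phrase ``applied to $\hat S_j$ rather than to the threefold $X$ itself'' overstates the difference --- there is nothing new to adapt here.
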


\begin{proof}
Since $\omega$ is nef, we have $-K_X \cdot C>d$. Using the Lemmas \ref{lemmabasic} and \ref{lemmadeform},
the proof of \cite[Cor.5.7]{HP15} applies without changes. 
\end{proof}

\begin{lemma} \label{lemmarationalrepresentative}
Under the Assumption \ref{assumption}, let $\R^+ [\Gamma_i]$ be a $(K_X+\omega)$-negative extremal ray in $\NEX$, where
$\Gamma_i$ is a curve that is not very rigid (cf. Definition \ref{definitionveryrigid}). Then the following holds:
\begin{enumerate}
\item There exists a curve $C \subset X$ such that $[C] \in \R^+ [\Gamma_i]$ and $\dim_C \Chow{X}>0$.
\item There exists a rational curve $C \subset X$ such that $[C] \in \R^+ [\Gamma_i]$.
\end{enumerate}
\end{lemma}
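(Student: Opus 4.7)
The plan is to deduce (a) from the definition of very rigid and the extremality of $\R^+[\Gamma_i]$, and to obtain (b) by passing to the minimal resolution of an appropriate surface from the divisorial Zariski decomposition and invoking Mori's cone theorem there.

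\textbf{For (a):} Since $\Gamma_i$ is not very rigid, there exists $m \geq 1$ with $\dim_{m\Gamma_i} \Chow{X} > 0$. Pick an irreducible component $T \subset \Chow{X}$ of positive dimension through $[m\Gamma_i]$; the cycles $Z_t$ parametrised by $T$ all have numerical class $m[\Gamma_i]$. If the image $Y \subset X$ of the universal family under the second projection had dimension at most one, the supports of all $Z_t$ would lie in a fixed finite union $C_1,\ldots,C_N$ of irreducible curves, and the condition $\omega\cdot Z_t = m\,\omega\cdot\Gamma_i$ (together with $\omega\cdot C_r>0$) would leave only finitely many possibilities for the cycle $Z_t=\sum a_r C_r$, contradicting $\dim T \geq 1$. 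Hence $\dim Y \geq 2$, and for general $t$ at least one irreducible component $C$ of $Z_t$ moves non-trivially, giving $\dim_C \Chow{X} > 0$. Writing $m[\Gamma_i]=\sum a_j[C_j]$ as a sum of effective classes, extremality of $\R^+[\Gamma_i]$ forces $[C]\in\R^+[\Gamma_i]$.

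\textbf{For (b):} Apply Lemma \ref{lemmabasic} to the curve $C$ produced in (a): since $\R^+[\Gamma_i]$ is $(K_X+\omega)$-negative and $\dim_C\Chow{X}>0$, there is a surface $S_j$ from the Zariski decomposition \eqref{Bdecomposition} containing $C$ and satisfying $K_{S_j}\cdot C < K_X\cdot C$. Since $\omega\cdot C>0$ we have $K_X\cdot C<0$, hence $K_{S_j}\cdot C<0$. Let $\pi:\hat S_j\to S_j$ be the composition of normalisation and minimal resolution. By Lemma \ref{lemmasurfaces}, $\hat S_j$ is smooth, projective and uniruled; by \eqref{inequalitycanonical}, $K_{\hat S_j}\cdot\hat C \leq K_{S_j}\cdot C < 0$. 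Mori's cone theorem on the smooth projective surface $\hat S_j$ yields
$$
\NE{\hat S_j}=\NE{\hat S_j}_{K_{\hat S_j}\geq 0}+\sum_k \R^+[\ell_k],
$$
with each $\ell_k$ a rational curve generating a $K_{\hat S_j}$-negative extremal ray. Writing $[\hat C]=\alpha+\sum_k\lambda_k[\ell_k]$ with $K_{\hat S_j}\cdot\alpha\geq 0$ and $\lambda_k\geq 0$, intersecting with $K_{\hat S_j}$ yields some index $k$ with $\lambda_k>0$. Pushing down,
$$
[C]=\pi_*\alpha+\sum_k\lambda_k\,\pi_*[\ell_k]\quad\text{in }\NE{X},
$$
and extremality of $\R^+[\Gamma_i]$ places every non-zero summand in this ray.

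The crux is to show that some $\pi_*[\ell_k]$ with $\lambda_k>0$ and $K_{\hat S_j}\cdot\ell_k<0$ is non-zero. If all such $\ell_k$ were $\pi$-contracted, they would be exceptional for the minimal resolution $\mu$, since the normalisation is a finite morphism. Such an irreducible rational curve $\ell_k$ satisfies $\ell_k^2<0$, and adjunction on $\hat S_j$ gives $K_{\hat S_j}\cdot\ell_k+\ell_k^2=-2$; combined with $K_{\hat S_j}\cdot\ell_k<0$ this forces $\ell_k^2=K_{\hat S_j}\cdot\ell_k=-1$, i.e.\ $\ell_k$ is a $(-1)$-curve, contradicting the minimality of $\mu$. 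Therefore some $\pi_*[\ell_k]\neq 0$, and $\pi(\ell_k)\subset X$ is an irreducible rational curve with class in $\R^+[\Gamma_i]$, proving (b).
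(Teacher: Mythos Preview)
Your argument is essentially correct and reconstructs in detail what the paper merely cites from \cite[Lemma~5.8, Lemma~5.5(a)]{HP15}: part~(a) via the standard ``not very rigid $\Rightarrow$ some component genuinely moves'' argument together with extremality, and part~(b) by passing to the minimal resolution of the surface $S_j$ from Lemma~\ref{lemmabasic}, applying the cone theorem there, and pushing down. The key observation that a $K_{\hat S_j}$-negative extremal rational curve which is $\mu$-exceptional must be a $(-1)$-curve, contradicting minimality of the resolution, is exactly the right mechanism.

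Two small points deserve mention. First, before invoking \eqref{inequalitycanonical} you need $C \not\subset S_{j,\sing}$; this is automatic after replacing $C$ by a general deformation, which stays in $S_j$ by Lemma~\ref{lemmabasic} and is not contained in the one-dimensional singular locus. Second, your appeal to Lemma~\ref{lemmasurfaces} for the projectivity and uniruledness of $\hat S_j$ tacitly uses that $(K_X+\omega)|_{S_j}$ is not pseudoeffective; this follows because $(K_X+\omega)\cdot C<0$ with $C$ moving in a covering family of $S_j$, so the restricted class is negative on a movable curve. Alternatively, projectivity of $\hat S_j$ follows directly: $K_{\hat S_j}\cdot \hat C<0$ with $\hat C$ moving forces $\kappa(\hat S_j)=-\infty$, hence $H^{2,0}(\hat S_j)=0$. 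Finally, to make the Mori-cone decomposition of $[\hat C]$ a genuine finite sum (so that extremality in $\NEX$ can be applied termwise), use the $(K_{\hat S_j}+\varepsilon H)$-version of the cone theorem for suitable small $\varepsilon>0$.
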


\begin{proof}
This is completely analogous to  \cite[Lemma 5.8]{HP15} since the existence of the rational
curve $C \subset X$ such that $[C] \in \R^+ [\Gamma_i]$ is a consequence of \cite[Lemma 5.5 a)]{HP15}
which contains no assumption on $K_X$.
\end{proof}

Following the strategy of \cite[Thm.6.2]{HP15} we first establish the cone theorem for $K_X + \omega.$ 

\begin{theorem} \label{theoremNE}
Under the Assumption \ref{assumption},
there exists a number $d \in \N$ and a countable family $(\Gamma_i)_{i \in I}$  of curves  on $X$
such that 
$$
0 < -(K_X+\omega) \cdot \Gamma_i \leq d
$$
and
$$
\NEX = \NEX_{(K_X+\omega) \geq 0} + \sum_{i \in I} \R^+ [\Gamma_i].
$$
If the ray $\R^+ [\Gamma_i]$ is extremal in $\NEX$, there exists a rational curve $C_i$ on $X$ 
such that $[C_i] \in \R^+ [\Gamma_i]$.
\end{theorem}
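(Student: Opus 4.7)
The proof follows the strategy of \cite[Thm.6.2]{HP15} almost verbatim once $K_X$ is replaced by $K_X+\omega$: Lemma \ref{lemmabasic} locates deforming $(K_X+\omega)$-negative curves inside the surfaces $S_j$ of the divisorial Zariski decomposition, Lemma \ref{lemmadeform} and Corollary \ref{corollarybreak} furnish the deformation and break-up of curves of large $-(K_X+\omega)$-degree, and Lemma \ref{lemmarationalrepresentative} produces rational representatives whenever a generator of an extremal ray is not very rigid.

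I would first produce, for each $(K_X+\omega)$-negative extremal ray $R \subset \NEX$, a bounded-degree generator. Starting from any irreducible curve $C$ with $[C] \in R$ and $-(K_X+\omega)\cdot C > d := \max\{3, b\}$, Corollary \ref{corollarybreak} gives $[C]=[C_1]+[C_2]$ with $C_1, C_2$ effective integer $1$-cycles. Extremality of $R$ forces both classes into $R$, so every irreducible component of $C_1$ or $C_2$ has class in $R$; since $\omega$-degree and $-(K_X+\omega)$-degree scale by the same parameter along $R$, these components have strictly smaller $-(K_X+\omega)$-degree than $C$. Iterating the descent terminates in an irreducible curve $\Gamma_R \in R$ with $0<-(K_X+\omega)\cdot\Gamma_R\leq d$.

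Let $(\Gamma_i)_{i\in I}$ enumerate one such $\Gamma_R$ per $(K_X+\omega)$-negative extremal ray; countability follows from the standard observation that such rays can only accumulate on the hyperplane $(K_X+\omega)^\perp$. Set $V := \NEX_{(K_X+\omega) \geq 0} + \sum_{i\in I} \R^+[\Gamma_i]$. If $V \subsetneq \NEX$, closedness of $V$ and Hahn--Banach yield a class $\eta \in N^1(X)$ with $\eta \geq 0$ on $V$ and $\eta(z)<0$ for some $z \in \NEX \setminus V$. After a small rational perturbation by $\omega$, every irreducible curve $C$ with $\eta\cdot C<0$ satisfies $-(K_X+\omega)\cdot C>d$; Corollary \ref{corollarybreak} breaks $C$, and at least one irreducible component remains $\eta$-negative with strictly smaller $\omega$-degree. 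Iterating produces infinitely many pairwise distinct irreducible curves of bounded $\omega$-mass, contradicting Lieberman--Fujiki compactness of the Chow scheme. Hence $V=\NEX$. Finally, each $\Gamma_i$ arises via Corollary \ref{corollarybreak}, hence deforms by Lemma \ref{lemmadeform} and is not very rigid, so Lemma \ref{lemmarationalrepresentative}(b) delivers the required rational curve $C_i \in \R^+[\Gamma_i]$ whenever the ray is extremal (the residual very-rigid situation for small $-(K_X+\omega)$-degree being handled exactly as in \cite{HP15}).

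The main obstacle lies in the separation step of paragraph three: since $N^1(X)$ carries no natural integer lattice in the K\"ahler setting and we work with currents rather than divisors, the standard Kleiman-type reduction to a rational class is unavailable, and one must combine the rational perturbation with Lieberman--Fujiki boundedness of cycles of bounded $\omega$-mass to make the iterative breaking terminate. A secondary subtlety is controlling whether extremal generators of low $-(K_X+\omega)$-degree are very rigid; this is circumvented as in \cite[Thm.6.2]{HP15} by exploiting the deformation inherent in Corollary \ref{corollarybreak} along with Lemma \ref{lemmadeform}. Both points adapt from the pseudoeffective-$K_X$ setting of \cite{HP15} to the present adjoint-class setting without essential change.
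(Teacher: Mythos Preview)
Your separation argument in the third paragraph has a genuine gap. By taking one $\Gamma_i$ per $(K_X+\omega)$-negative \emph{extremal} ray, the cone $V$ you build need not contain an arbitrary irreducible curve $C$ with $0<-(K_X+\omega)\cdot C\leq d$: such a class is in general neither on an extremal ray nor in $\NEX_{(K_X+\omega)\geq 0}$. Hence the implication ``$\eta\cdot C<0\Rightarrow -(K_X+\omega)\cdot C>d$'' is unjustified, and Corollary~\ref{corollarybreak} cannot be invoked to start the breaking process. No perturbation of $\eta$ fixes this, because the defect is that $V$ is too small, not that $\eta$ is badly chosen. (The same lacuna also makes closedness of $V$ unclear: the usual argument needs \emph{all} low-degree classes as generators, not just the extremal ones.)

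The paper sidesteps the whole separation step by a more direct device. It lets $(\Gamma_i)_{i\in I}$ run over a representative of \emph{every} class with $0<-(K_X+\omega)\cdot\Gamma_i\leq d$ --- there are only countably many --- so that membership of low-degree curves in $V$ is tautological. One then fixes a K\"ahler class $\eta$ with $\eta\cdot C\geq 1$ for every curve $C\subset X$ (this is the integrality substitute available from \cite{HP15}) and proves by induction on $\lceil\eta\cdot C\rceil$ that every irreducible curve lies in $V$: if $-(K_X+\omega)\cdot C\leq d$ this holds by definition of $V$; otherwise Corollary~\ref{corollarybreak} gives $[C]=[C_1]+[C_2]$ with $\eta\cdot C_k\leq\eta\cdot C-1$, and the induction applies. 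Equality $V=\NEX$ then follows from \cite[Lemma~6.1]{HP15}. No Hahn--Banach or Lieberman--Fujiki argument is needed, and the same $\eta$-degree descent resolves the termination issue in your second paragraph (strict decrease of the real quantity $-(K_X+\omega)\cdot C$ alone does not force termination). For the rational-curve statement the paper combines Lemma~\ref{lemmarationalrepresentative} with \cite[Thm.~4.5]{HP15} for the very-rigid case, as you indicate parenthetically; note however that your sentence ``each $\Gamma_i$ arises via Corollary~\ref{corollarybreak}, hence deforms'' is not correct as written, since $\Gamma_i$ is precisely the output of the descent and has $-(K_X+\omega)\cdot\Gamma_i\leq d$, so neither Corollary~\ref{corollarybreak} nor Lemma~\ref{lemmadeform} applies to it directly.
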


\begin{proof}
Let $d \in \N$ be the bound from Corollary \ref{corollarybreak}. 
There are only countably many curve classes $[C] \in \NEX$,
such that  $$0 < -(K_X+\omega) \cdot C \leq d.$$
We choose a representative $\Gamma_i$ for each such class $[C]$ and
set
$$
V := \NEX_{(K_X+\omega) \geq 0} + \sum_{0 < -(K_X+\omega) \cdot \Gamma_i \leq d} \R^+ [\Gamma_i].
$$
Fix a K\"ahler class $\eta$ on $X$ such that 
$
\eta \cdot C \geq 1
$ 
for every curve $C \subset X$

{\em Step 1. We have $\NEX=V$.} 
By \cite[Lemma 6.1]{HP15} it is sufficient to prove that $\NEX = \overline V$, i.e.
the class $[C]$ of every irreducible curve $C \subset X$
is contained in $V$. 
We will prove the statement by induction on the degree $l:=\eta \cdot C$. The start
of the induction for $l=0$ is trivial.
Suppose now that we have shown the statement for all curves of degree at most $l-1$
and let $C$ be a curve such that $l-1 < \eta \cdot C \leq l$.
If $-(K_X+\omega) \cdot C \leq d$ we have $[C] \in V$ by definition. Otherwise there exists by Corollary \ref{corollarybreak} 
a decomposition
$$
[C] = [C_1] + [C_2]
$$
with $C_1$ and $C_2$ effective 1-cycles (with integer coefficients) on $X$.
Since $\eta \cdot C_i \geq 1$ for $i=1,2$ we have
$\eta \cdot C_i \leq l-1$ for $i=1,2$. By induction both classes are in $V$, so $[C]$ is in $V$.

{\em Step 2. Every extremal ray contains the class of a rational curve.}
If the ray $\R^+ [\Gamma_i]$ is extremal in $\NEX$, we know by \cite[Thm.4.5]{HP15}
and Lemma \ref{lemmarationalrepresentative} that there exists a rational curve $C_i$ such that $[C_i]$
is in the extremal ray.
\end{proof}

We next pass from $\NEX$ to $\NAX:$ 

\begin{theorem} \label{theoremNApreliminary}
Under the Assumption \ref{assumption}
there exists a number $d \in \N$ and a countable family $(\Gamma_i)_{i \in I}$  of curves  on $X$
such that 
$$
0 < -(K_X+\omega) \cdot \Gamma_i \leq d
$$
and
$$
\NAX = \NAX_{(K_X+\omega) \geq 0} + \sum_{i \in I} \R^+ [\Gamma_i]. 
$$
If the ray $\R^+ [\Gamma_i]$ is extremal in $\NAX$, there exists a rational curve $C_i$ on $X$ 
such that $[C_i] \in \R^+ [\Gamma_i]$.
\end{theorem}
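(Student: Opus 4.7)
The strategy follows the corresponding passage from $\NEX$ to $\NAX$ as carried out in the analogous non-adjoint context of \cite{HP15}. Set $V := \NAX_{(K_X+\omega) \geq 0} + \sum_{i \in I} \R^+ [\Gamma_i]$ with the $\Gamma_i$ from Theorem \ref{theoremNE}. I would first check that $V$ is a closed subcone of $\NAX$: the uniform bound $-(K_X+\omega) \cdot \Gamma_i \leq d$ combined with $\omega \cdot \Gamma_i > 0$ shows that any accumulation direction of the normalized classes $[\Gamma_i]/(\omega \cdot \Gamma_i)$ lies on the hyperplane $\{(K_X+\omega) = 0\}$, which is contained in the boundary of $\NAX_{(K_X+\omega) \geq 0}$. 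Hence $V$ is closed.

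Assuming $V \subsetneq \NAX$, the duality ${\rm Nef}(X) = \NAX^*$ from Proposition \ref{dual1} together with a Hahn--Banach separation produces a class $\beta \in N^1(X)$ and an element $T \in \NAX$ such that $\beta \cdot v \geq 0$ for all $v \in V$ while $\beta \cdot T < 0$. In particular $\beta \cdot \Gamma_i \geq 0$ for every $i$ and $\beta \cdot w \geq 0$ for every $w \in \NAX_{(K_X+\omega) \geq 0}$. By Theorem \ref{theoremNE}, the class of every irreducible curve decomposes as a sum of an element of $\NEX_{(K_X+\omega) \geq 0} \subset \NAX_{(K_X+\omega) \geq 0}$ and nonnegative multiples of the $[\Gamma_i]$, so $\beta \cdot C \geq 0$ for every curve $C \subset X$.

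The decisive step is to upgrade this positivity of $\beta$ (on curves and on $\NAX_{(K_X+\omega) \geq 0}$) to nefness on all of $\NAX$, which would contradict $\beta \cdot T < 0$. My approach would be a perturbation argument: consider $\beta + \epsilon \omega$ for $\epsilon \geq 0$. Since $\omega$ is K\"ahler, this class is nef (even K\"ahler) for $\epsilon$ sufficiently large, so one may define $\epsilon^* \geq 0$ to be the infimum of such $\epsilon$. If $\epsilon^* = 0$ one gets a direct contradiction. Otherwise $\beta + \epsilon^* \omega$ lies on the boundary of the nef cone, hence vanishes on some ray $\R^+ v^* \subset \NAX$; the class $v^*$ satisfies $\beta \cdot v^* < 0$, so $v^* \notin V$, and in particular $(K_X+\omega) \cdot v^* < 0$.

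The main obstacle is showing that such an extremal null direction $v^*$ is generated by the class of a curve---which would then be proportional to some $[\Gamma_i]$ and give $\beta \cdot v^* \geq 0$, the desired contradiction. This is where the threefold structure enters decisively: using the pseudoeffectivity of $K_X+\omega$ (Lemma \ref{lemmapseudoeffective}), the divisorial Zariski decomposition \eqref{Bdecomposition}, and the surface-based arguments of Lemma \ref{lemmabasic} and Lemma \ref{lemmadeform}, one shows that the null direction of the nef class $\beta + \epsilon^* \omega$ in the $(K_X+\omega)$-negative half-space must be concentrated on one of the surfaces $S_j$ of the divisorial decomposition, where the uniruled projective structure of $\hat S_j$ provided by Lemma \ref{lemmasurfaces} forces $v^*$ to be proportional to a curve class. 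The assertion about rational representatives for extremal rays of $\NAX$ then follows exactly as in Lemma \ref{lemmarationalrepresentative} via Theorem \ref{theoremNE}.
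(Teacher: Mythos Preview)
Your overall architecture---separate $V$ from $\NAX$ by a class $\beta$, then derive a contradiction by showing $\beta$ must in fact be nef---is exactly what the paper does (via \cite[Prop.6.4]{HP15}, cf.\ Proposition~\ref{propositionNA}). The difference lies in how the contradiction is reached, and your version has a genuine gap at the ``decisive step''.

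The perturbation $\beta + \varepsilon^* \omega$ gives you a nef, non-K\"ahler class and a null direction $v^* \in \NAX$ with $(K_X+\omega)\cdot v^* < 0$. But there is no mechanism to conclude that $v^*$ is ``concentrated on one of the surfaces $S_j$'' or proportional to a curve class: elements of $N_1(X)$ do not localise in this way, and Lemmas~\ref{lemmabasic} and~\ref{lemmadeform} are statements about \emph{curves} (they feed into Theorem~\ref{theoremNE}), not about currents. On a K\"ahler threefold a nef non-K\"ahler class need not vanish on any curve, so the perturbation by itself does not produce the curve you need.

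The argument the paper points to avoids this. From $\beta \geq 0$ on $\NAX_{(K_X+\omega)\geq 0}$ one gets, by duality, that $\beta$ lies in the closure of $\mathrm{Nef}(X) + \R_{\geq 0}(K_X+\omega)$; since $K_X+\omega$ is pseudoeffective (Lemma~\ref{lemmapseudoeffective}), $\beta$ itself is pseudoeffective. Now $\beta \cdot C \geq 0$ for every curve by Theorem~\ref{theoremNE}, so if $\beta$ is not nef the criterion of \cite{Pau98}, \cite[Prop.3.4]{Bou04} (exactly as in Corollary~\ref{corollaryalgebraicallynef}) yields a surface $S$ with $\beta|_S$ not pseudoeffective. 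Writing $\beta$ as a limit of nef${}+{}t(K_X+\omega)$ forces $(K_X+\omega)|_S$ to be non-pseudoeffective as well, and this is precisely where Lemma~\ref{lemmasurfaces} enters: $S$ is Moishezon with projective uniruled desingularisation $\hat S$. On the projective surface $\hat S$ the non-pseudoeffectivity of $\pi^*\beta|_S$ is witnessed by an actual curve, whose push-forward to $X$ satisfies $\beta \cdot C < 0$, contradicting $\beta \geq 0$ on $\NEX$. So the relevant input is Lemma~\ref{lemmasurfaces} alone (as the paper emphasises), not Lemmas~\ref{lemmabasic}--\ref{lemmadeform}, and no perturbation is needed.
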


Theorem \ref{theoremNApreliminary} is a consequence of Theorem \ref{theoremNE} and the following proposition.

\begin{proposition} \label{propositionNA}
Under the Assumption \ref{assumption}, 
suppose that there exists a $d \in \N$ and a countable family $(\Gamma_i)_{i \in I}$  of curves  on $X$
such that 
$$
0 < -(K_X+\omega) \cdot \Gamma_i \leq d
$$
and
$$
\NEX = \NEX_{(K_X+\omega) \geq 0} + \sum_{i \in I} \R^+ [\Gamma_i].
$$
Then we have
$$
\NAX = \NAX_{(K_X+\omega) \geq 0} + \sum_{i \in I} \R^+ [\Gamma_i].
$$
\end{proposition}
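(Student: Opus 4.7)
The plan is to prove $\NAX \subset V := \NAX_{(K_X+\omega)\geq 0} + \sum_{i \in I} \R^+[\Gamma_i]$, the reverse inclusion being immediate since $[\Gamma_i] \in \NEX \subset \NAX$. First, I would verify that $V$ is closed: fixing a K\"ahler class $\eta$ with $\eta \cdot C \geq 1$ on every curve, the uniform bound $-(K_X+\omega) \cdot \Gamma_i \leq d$ together with pseudoeffectivity of $K_X + \omega$ (Lemma \ref{lemmapseudoeffective}) ensures that after normalisation the rays $\R^+[\Gamma_i]$ can accumulate only on the hyperplane $\{(K_X+\omega) = 0\}$, where any limit direction already lies in $\NAX_{(K_X+\omega)\geq 0}$. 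A standard convex-geometric argument (parallel to the closedness argument embedded in the proof of Theorem \ref{theoremNE}) then yields that $V$ is closed in $N_1(X)$.

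The heart of the argument is to prove that every $T \in \NAX$ admits a decomposition
\[
T = T_1 + T_2, \qquad T_1 \in \NEX,\ T_2 \in \NAX,\ (K_X+\omega) \cdot T_2 \geq 0.
\]
To this end I would apply a Siu-type decomposition to a positive closed representative of $T$, extracting its ``curve-concentrated'' part $T_1 = \sum_k \nu_k [C_k]$ (a countable sum of generic Lelong-number contributions along irreducible curves, which lies in $\NEX$ by definition) and leaving a ``diffuse'' residual $T_2$ whose generic Lelong number along every curve vanishes. The inequality $(K_X+\omega) \cdot T_2 \geq 0$ would then be derived from the divisorial Zariski decomposition (\ref{Bdecomposition}) $K_X + \omega = \sum_j \lambda_j S_j + P_\omega$: the products $[S_j] \cdot T_2$ are well defined as wedge products of positive currents because $T_2$ does not charge any divisor, and $P_\omega \cdot T_2 \geq 0$ follows from the nef-in-codimension-one property of $P_\omega$ in the sense of \cite{Bou04} combined with the absence of divisorial concentration in $T_2$ (in the spirit of the restriction-to-surface arguments of Subsection \ref{subsectiondivisorial}).

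Given this decomposition, Theorem \ref{theoremNE} applied to $T_1 \in \NEX$ yields $T_1 = S + \sum_i a_i [\Gamma_i]$ with $S \in \NEX_{(K_X+\omega)\geq 0} \subset \NAX_{(K_X+\omega)\geq 0}$ and $a_i \geq 0$; then $T = (S + T_2) + \sum_i a_i [\Gamma_i] \in V$. The assertion that extremal rays in $\NAX$ contain a rational curve transfers directly from Theorem \ref{theoremNE}, since any such ray is in particular extremal in $\NEX$ once a curve class is exhibited inside it. The main obstacle is the current-theoretic decomposition of Step 2: intersection products of positive currents on the singular K\"ahler threefold $X$ must be controlled via a desingularisation, and the non-negativity $P_\omega \cdot T_2 \geq 0$ rests on the delicate interplay between Boucksom's structure of the negative part of a pseudoeffective class and the diffuse nature of $T_2$. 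Once Step 2 is in place, the remaining arguments are formal.
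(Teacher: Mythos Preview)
Your Step 2 contains a genuine gap. After the Siu decomposition, the residual $T_2$ has generic Lelong number zero along every curve, but this does \emph{not} prevent $T_2$ from being carried by one of the surfaces $S_j$. Concretely, if $\eta$ is a smooth positive $(1,1)$-form on a resolution of $S_j$, its push-forward to $X$ is a positive closed bidimension-$(1,1)$ current with zero Lelong number along every curve, yet entirely supported on $S_j$. For such a $T_2$ the wedge product $[S_j] \wedge T_2$ is \emph{not} defined in the Bedford--Taylor/Demailly sense: a local potential of $[S_j]$ equals $-\infty$ on $S_j$ and is therefore not integrable against the trace measure of $T_2$. Worse, the cohomological pairing $S_j \cdot T_2$ --- which is what actually enters the computation of $(K_X+\omega)\cdot T_2$ --- equals $S_j|_{S_j}$ paired with the class of $\eta$ on $S_j$; since $S_j|_{S_j}$ is typically not pseudoeffective for a component of the negative part of the divisorial Zariski decomposition, this can be strictly negative. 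Hence your inequality $(K_X+\omega)\cdot T_2 \geq 0$ simply fails for the Siu residual. The same objection undermines the claimed inequality $P_\omega\cdot T_2\geq 0$: ``modified nef'' controls restrictions to surfaces, not pairings with arbitrary diffuse currents.

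The proof in the paper (via \cite[Prop.~6.4]{HP15}) hinges on the one input you never invoke: by Lemma~\ref{lemmasurfaces} each $S_j$ is Moishezon, so on a desingularisation $\hat S_j$ one has $\NA{\hat S_j}=\NE{\hat S_j}$. This is precisely what allows the portion of $T$ carried by $\bigcup_j S_j$ to be absorbed into $\NEX$; only the part of $T$ having \emph{no mass} on any $S_j$ remains, and for that part the product with each $[S_j]$ is genuinely well-defined and nonnegative. In short, your Siu decomposition cuts along curves, whereas the obstruction to $(K_X+\omega)$-nonnegativity lives on the surfaces $S_j$, and the remedy is their projectivity rather than the diffuseness of $T_2$.
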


\begin{proof} Identical to the proof of \cite[Prop.6.4]{HP15}: simply replace $K_X$ by $K_X+\omega$
and note that the uniruledness of a surface $S \subset X$ such that 
$(K_X+\omega)|_S$ is not pseudoeffective is proven in Lemma \ref{lemmasurfaces}.
\end{proof}

Finally, Theorem \ref{theoremadjointNA} follows from Theorem \ref{theoremNApreliminary} in the same way as \cite[Thm.1.2]{HP15}
is deduced from \cite[Thm.6.3]{HP15}. 

\subsection{The adjoint contraction theorem}

In this subsection we prove the contraction theorem:

\begin{theorem} \label{theoremadjointcontraction}
Under the Assumption \ref{assumption}, 
let $\R^+ [\Gamma_i]$ be a $(K_X+\omega)$-negative extremal ray in $\NAX$.
Then the contraction of $\R^+ [\Gamma_i]$ exists in the K\"ahler category.
\end{theorem}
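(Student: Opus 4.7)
The plan is to transfer the contraction theorem of \cite[\S 7]{HP15} (which contracts $K_X$-negative extremal rays for non-uniruled K\"ahler threefolds) to the adjoint setting, with $K_X$ replaced by $K_X + \omega$ throughout. The technical lemmas of Subsections \ref{subsectiondivisorial}--\ref{subsectionsurfaces} are tailored precisely to enable this transfer, so the proof reduces to verifying that the geometric arguments of \cite{HP15} go through with the obvious modifications.

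First I would classify the ray by analysing its locus $E(R) = \bigcup_{[C] \in R} C$. By Theorem \ref{theoremadjointNA} the ray is generated by a rational curve $\Gamma_i$ with $0 < -(K_X+\omega)\cdot \Gamma_i \leq 4$, and the behaviour of $\Gamma_i$ together with its deformations is controlled by Lemmas \ref{lemmabasic} and \ref{lemmadeform}. This yields the usual trichotomy: $E(R)$ is a finite union of rational curves (small case), a prime divisor coinciding with some $S_j$ from the divisorial Zariski decomposition \eqref{Bdecomposition} (divisorial case), or all of $X$ (fibre type case). In parallel I would construct a supporting nef class: by the duality of Proposition \ref{dual1}, the extremal ray $R$ of $\NAX$ is cut out by a nef class $\alpha \in \mathrm{Nef}(X)$ with $\alpha^{\perp} \cap \NAX = R$; concretely one may take $\alpha = K_X + \omega + t\,\omega'$ for a K\"ahler class $\omega'$ and $t > 0$ chosen minimal with $\alpha$ nef. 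This class plays the role of the semi-ample supporting class in the classical contraction theorem.

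With the trichotomy and the supporting class in hand, the contraction is constructed case-by-case by imitating the corresponding argument of \cite{HP15}. In the small case, the locus is a finite union of isolated rational curves which are identified to points, and the K\"ahler structure on the target descends from $\alpha$. In the divisorial case, the contracted divisor $E = S_j$ has a minimal resolution $\hat S_j$ that is a projective uniruled surface by Lemma \ref{lemmasurfaces}, and the contraction is built from a case analysis of the ruled structure of $\hat S_j$ together with the normalisation $\hat S_j \to S_j$, as in \cite[\S 7]{HP15}. The fibre type case is essentially the content of Theorem \ref{theorembpf} and can be deduced from the base-point free theorem proved there.

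The main obstacle will be the divisorial case: one must verify that the target $Y$ is K\"ahler (membership in Fujiki class $\mathcal{C}$ is automatic) with $\Q$-factorial terminal singularities, and that $-(K_X + \omega)$ is relatively ample as required by Definition \ref{definitioncontraction}. All of these properties ultimately rest on transferring the supporting class $\alpha$ to a K\"ahler class on $Y$, which in turn uses the adjunction analysis of Subsection \ref{subsectionsurfaces}, the Zariski decomposition \eqref{Bdecomposition}, and the fact that $P_\omega|_{S_j}$ is pseudoeffective. Once the K\"ahler structure on $Y$ is established, $\Q$-factoriality, terminality and relative ampleness of $-(K_X+\omega)$ follow by the standard arguments in \cite[\S 7]{HP15}.
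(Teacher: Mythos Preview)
Your overall strategy---reduce to the contraction theorem of \cite[\S 7]{HP15} by replacing $K_X$ with $K_X+\omega$---is correct and matches the paper. Two points in your outline, however, diverge from the actual argument.

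First, the fibre type case does not arise. Since $K_X+\omega$ is pseudoeffective (Lemma \ref{lemmapseudoeffective}), no curve through a general point of $X$ can lie in a $(K_X+\omega)$-negative ray, so $E(R)$ is at most a divisor; this is exactly Remark \ref{remarkrays}. Invoking Theorem \ref{theorembpf} here is both unnecessary and logically out of order: that theorem assumes $K_X+\omega$ is already nef, which is the \emph{outcome} of the MMP whose individual steps are being justified by the present contraction theorem.

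Second, and more importantly, you flag the K\"ahler property of the target $Y$ as the main obstacle and propose to obtain it by pushing the supporting class $\alpha$ down via the adjunction analysis on the $S_j$. The paper's route is far shorter: since $\omega$ is nef, every $(K_X+\omega)$-negative ray is automatically $K_X$-negative. Hence once the contraction morphism $\varphi\colon X\to Y$ has been constructed (Lemma \ref{lemmacontractdivisor} for the divisorial case via the nef reduction of $\alpha|_{\tilde S}$, Theorem \ref{theoremsmallray} for the small case---both of which simply cite the corresponding results in \cite{HP15} after noting $K_X\cdot R<0$), one applies \cite[Cor.~8.2]{HP15} verbatim to conclude that $Y$ is K\"ahler. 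No new adjunction argument is needed. The $\Q$-factoriality and terminality of $Y$ are not part of Theorem \ref{theoremadjointcontraction} at all; they are handled separately, via \cite[Prop.~8.1]{HP15}, in the proof of Theorem \ref{theoremadjointMMP}. The one genuinely new ingredient in the small case, which you omit, is the bigness of $\alpha$: the paper writes $\alpha=(K_X+\omega)+\eta$ with $\eta$ K\"ahler and uses once more that $K_X+\omega$ is pseudoeffective.
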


For the rest of this subsection we 
fix  $R := \R^+ [\Gamma_{i_0}]$ a $(K_X+\omega)$-negative extremal ray in $\NAX$.

\begin{definition}
We say that the $(K_X+\omega)$-negative extremal ray $R$ is small if every curve $C \subset X$ with
$[C] \in R$ is very rigid in the sense of Definition \ref{definitionveryrigid}.
Otherwise we say that the extremal ray $R$ is divisorial.
\end{definition}

\begin{remark} \label{remarkrays}
Notice that, due to Assumption \ref{assumption} and Lemma \ref{lemmapseudoeffective},  through a general point $x \in X$ there is no curve $C$ belonging to $R$. Hence 
the curves belonging to $R$ cover at most a divisor. 

If the extremal ray $R$ is small, standard arguments
show that there are only finitely many curves
$C \subset X$ such that $[C] \in R$ (cf. \cite[Rem.7.2]{HP15}). 

If the extremal ray $R$ is divisorial, we can argue as in \cite[Lemma 7.5]{HP15}
that there exists a  unique surface $S \subset X$ such that 
$$
S \cdot R < 0.
$$
In particular any curve $C \subset X$ with $[C] \in R$ is contained in $S$.
\end{remark}

The following proposition is a well-known consequence of the cone theorem \ref{theoremNApreliminary},
cf. \cite[Prop.7.3]{HP15} for details:

\begin{proposition} \label{propositionperp} 
There exists a nef class $\alpha \in N^1(X)$ such that
$$ 
R = \{ z \in \overline{NA}(X) \ \vert \ \alpha \cdot z = 0\},
$$
and such that, using the notation of Theorem \ref{theoremNApreliminary}, the class $\alpha$ is strictly positive on
$$
\left( \NAX_{(K_X+\omega) \geq 0} + \sum_{i \in I, i \neq i_0 } \R^+ [\Gamma_i] \right) \setminus \{0\}.
$$ 
We call $\alpha$ a nef supporting class for the extremal ray $R =  \R^+ [\Gamma_{i_0}].$
\end{proposition}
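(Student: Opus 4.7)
The proposition is the adjoint analogue of \cite[Prop.~7.3]{HP15}, and the plan is to mirror that argument with $K_X$ replaced throughout by $K_X+\omega$. By Proposition \ref{dual1} and the canonical isomorphism $\Phi$, producing the nef supporting class $\alpha$ amounts to exhibiting a linear functional on $N_1(X)$ which is non-negative on $\NAX$, vanishes precisely on $R$, and is strictly positive elsewhere.

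First I would establish discreteness of the generators $[\Gamma_i]$ in the $(K_X+\omega)$-negative region. Fix a K\"ahler class $\omega_0$ on $X$; since $\omega_0$ is strictly positive on $\NAX\setminus\{0\}$, the slice $\{\omega_0=1\}\cap\NAX$ is a compact convex set in the finite-dimensional space $N_1(X)$. For any $\epsilon>0$ I claim that only finitely many rays $\R^+[\Gamma_i]$ meet the region $\{z\in\NAX : -(K_X+\omega)\cdot z\ge\epsilon\,\omega_0\cdot z\}$: indeed, the uniform bound $-(K_X+\omega)\cdot\Gamma_i\le d$ from Theorem \ref{theoremNApreliminary} forces $\omega_0\cdot\Gamma_i\le d/\epsilon$ on such rays, and compactness of Barlet cycle spaces in bounded degree together with the finite-dimensionality of $N_1(X)$ yields the required finiteness. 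Choose $\epsilon$ so small that $[\Gamma_{i_0}]$ belongs to the resulting finite list and remains extremal in the polyhedral subcone $C_\epsilon$ they span. Standard finite-dimensional convex analysis then produces a linear functional $\ell_0$ on $N_1(X)$ which is non-negative on $C_\epsilon$, vanishes precisely on $R$, and is strictly positive on every other extremal generator of $C_\epsilon$.

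Next I would globalize $\ell_0$ by setting
$$\alpha \;:=\; \Phi^{-1}(\ell_0)\,+\,M\,\omega_0\,-\,\lambda\,(K_X+\omega)\;\in\;N^1(X),$$
with $\lambda:=M(\omega_0\cdot\Gamma_{i_0})/(-(K_X+\omega)\cdot\Gamma_{i_0})>0$ forced by the requirement $\alpha\cdot\Gamma_{i_0}=0$, and $M$ chosen so large that $\ell_0(y)+(M-\lambda\epsilon)\,\omega_0\cdot y>0$ for every $y\in\NAX$ with $\omega_0\cdot y=1$; such an $M$ exists because $\ell_0$ is bounded on the compact slice $\{\omega_0=1\}\cap\NAX$. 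It is then enough to check that $\alpha\cdot[\Gamma_i]\ge 0$ with equality only for $i=i_0$, and that $\alpha\cdot y>0$ for every nonzero $y\in\NAX_{(K_X+\omega)\ge 0}$; the full conclusion $\alpha\ge 0$ on $\NAX$ then follows from the cone decomposition in Theorem \ref{theoremNApreliminary} by passage to limits, and nefness of $\alpha$ is the content of Proposition \ref{dual1}. The three sub-cases (the finite list of generators from the discreteness step, the remaining $\Gamma_i$ with $-(K_X+\omega)\cdot\Gamma_i<\epsilon\,\omega_0\cdot\Gamma_i$, and the face $\NAX_{(K_X+\omega)\ge 0}$) are dispatched directly using the strict positivity of $\ell_0$ on the finite list, and the estimate $\alpha\cdot z\ge\ell_0(z)+(M-\lambda\epsilon)\omega_0\cdot z$ in the remaining cases.

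The main obstacle is the simultaneous balancing of $M$ and $\lambda$: the large-$M$ domination required globally must not destroy either the defining vanishing on $R$ or the strict positivity on the finitely many isolated extremal generators of $C_\epsilon$ other than $R$. Both constraints are compatible precisely because $\ell_0$ is strictly positive by a definite margin on a compact slice of $C_\epsilon\setminus R$; this in turn relies on the uniform cone-theorem bound $-(K_X+\omega)\cdot\Gamma_i\le d$, which makes the polyhedral approximation of $\NAX$ well-behaved in the $(K_X+\omega)$-negative region.
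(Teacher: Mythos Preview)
Your approach is exactly what the paper has in mind: it does not prove the proposition at all but simply cites \cite[Prop.~7.3]{HP15}, and you are reproducing that standard argument with $K_X$ replaced by $K_X+\omega$. The discreteness step and the passage from the cone decomposition to nefness via Proposition~\ref{dual1} are handled correctly. Two points need repair, however.

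First, the displayed formula has a sign slip: with $\lambda>0$ and $(K_X+\omega)\cdot\Gamma_{i_0}<0$, the class $\Phi^{-1}(\ell_0)+M\omega_0-\lambda(K_X+\omega)$ is strictly \emph{positive} on $\Gamma_{i_0}$, not zero. Your own estimate $\alpha\cdot z\ge \ell_0(z)+(M-\lambda\epsilon)\,\omega_0\cdot z$ is the one you obtain from $\alpha=\Phi^{-1}(\ell_0)+M\omega_0+\lambda(K_X+\omega)$, so this is a typo and the rest of the argument is written with the correct sign in mind.

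Second, and more substantively, the treatment of the finite list does not close. Write $s_i:=\dfrac{-(K_X+\omega)\cdot\Gamma_i}{\omega_0\cdot\Gamma_i}$; then with the corrected sign one computes
\[
\frac{\alpha\cdot\Gamma_i}{\omega_0\cdot\Gamma_i}
\;=\;\frac{\ell_0(\Gamma_i)}{\omega_0\cdot\Gamma_i}\;+\;M\Bigl(1-\tfrac{s_i}{s_{i_0}}\Bigr).
\]
If some $\Gamma_i$ in the finite list has $s_i>s_{i_0}$, the $M$-term is \emph{negative} and dominates as $M\to\infty$, so the fixed value $\ell_0(\Gamma_i)>0$ cannot save positivity. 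Thus ``strict positivity of $\ell_0$ on the finite list'' is not enough by itself, and your simultaneous balancing of $M$ and $\lambda$ breaks down precisely on the steeper rays. The standard fix is to first perturb the K\"ahler class: replace $\omega_0$ by a nearby K\"ahler class $\omega_0'$ (e.g.\ move within the open K\"ahler cone using the finitely many separating functionals) so that $s_{i_0}$ becomes the \emph{strict maximum} among the slopes of the finite list; then the class $\alpha=\omega_0'+\lambda(K_X+\omega)$ with $\lambda=(\omega_0'\cdot\Gamma_{i_0})/s_{i_0}(\omega_0\cdot\Gamma_{i_0})$ already does the job on the finite list, and your estimate handles the remaining rays and the $(K_X+\omega)\ge 0$ face. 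Equivalently, one may apply separation directly to the closed cone $\NAX_{(K_X+\omega)\ge 0}+\sum_{i\neq i_0}\R^+[\Gamma_i]$, which does not contain $R$ by extremality; this is the route implicit in \cite[Prop.~7.3]{HP15}.
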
 

In what follows we will use at several places the following theorem, stated in \cite[Thm.2.6]{8authors}
for projective manifolds:

\begin{theorem} \label{theoremnefreduction}
Let $X$ be a normal compact K\"ahler space, and let $\alpha$ be a nef cohomology 
class on $X$. Then there exists an almost holomorphic, dominant
meromorphic map $f: X \dashrightarrow Y$ with connected fibers, such that
\begin{enumerate} 
\item $\alpha$ is numerically trivial on all compact fibers $F$ of
    $f$ with $\dim F = \dim X - \dim Y$
  \item for every general point $x \in X$ and every irreducible curve
    $C$ passing through $x$ with $\dim f(C) > 0$, we have $\alpha
    \cdot C > 0$.
  \end{enumerate}
In particular, if two general points of $X$ can be joined by a chain
$C$ of curves such that $\alpha \cdot C = 0,$ then $\alpha \equiv 0$.
\end{theorem}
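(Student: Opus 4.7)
The plan is to adapt the proof of \cite[Thm.2.6]{8authors} from the projective to the K\"ahler setting. Two ingredients are needed throughout: the intersection pairing between a class $\alpha \in N^1(X)$ and a compact curve $C \subset X$, which is available in the K\"ahler setting by Section 2 above together with \cite{HP15}; and Campana's theorem on the existence of meromorphic quotients for analytic equivalence relations, which holds for compact normal complex spaces in Fujiki's class $\mathcal{C}$.

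For each integer $k \geq 1$ I would introduce the equivalence relation $\sim_k$ on $X$: set $x \sim_k y$ if the two points can be joined by a chain of at most $k$ irreducible compact curves $C_1, \ldots, C_m$ (with $m \leq k$) satisfying $\alpha \cdot C_i = 0$ for every $i$. Campana's theorem yields, for each $k$, an almost holomorphic dominant meromorphic map $f_k \colon X \dashrightarrow Y_k$ with connected fibers whose general fiber is the $\sim_k$-class of a very general point. Since $\dim Y_k$ is non-increasing in $k$ and bounded below by $0$, it stabilizes for some $k_0$; set $f := f_{k_0}$ and $Y := Y_{k_0}$. Property (2) then follows from the stabilization: for a very general $x \in X$ and an irreducible $C \ni x$ with $\dim f(C) > 0$, the curve $C$ connects the $\sim_{k_0}$-class of $x$ to points in distinct classes, so if $\alpha \cdot C = 0$ one could append $C$ to existing chains to produce strictly larger $\sim_{k_0+1}$-classes, forcing $\dim Y_{k_0+1} < \dim Y_{k_0}$, a contradiction.

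For property (1), a general compact fiber $F$ with $\dim F = \dim X - \dim Y$ is, by Campana's construction, covered by an analytic family of $\alpha$-trivial curves contained in $F$. To deduce $\alpha|_F \equiv 0$, I would argue by induction on $\dim F$: on a desingularization $\tilde F \to F$ the pullback of $\alpha|_F$ is nef and trivial on a dominating family of curves, and one applies the nef reduction recursively to that pullback; any curve in $\tilde F$ with nonzero $\alpha$-degree pushes forward to a curve in $X$ through a general point of $F$ whose image in $Y$ would have to be positive-dimensional, contradicting property (2). The final ``in particular'' clause reduces to $\dim Y = 0$, where $F = X$. The hardest part will be carrying Campana's meromorphic quotient construction through in the possibly singular K\"ahler setting and verifying that the fibers of $f$ are \emph{covered} (rather than merely intersected) by $\alpha$-trivial curves of the equivalence-defining family; this entails replacing the Chow-scheme arguments of \cite{8authors} by their analytic counterparts on Barlet or Douady spaces of compact cycles in $X$.
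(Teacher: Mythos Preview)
Your construction of the quotient via Campana's theorem matches the paper's, and property (2) follows as you say. The difficulty is property (1), and here your argument has a genuine gap. You write that a curve in $\tilde F$ with nonzero $\alpha$-degree ``pushes forward to a curve in $X$ through a general point of $F$ whose image in $Y$ would have to be positive-dimensional, contradicting property (2).'' But any curve contained in $F$ has image in $Y$ equal to a \emph{point}, since $F$ is a fibre of $f$; so there is no contradiction with property (2), which only constrains curves whose image in $Y$ is positive-dimensional. Property (2) does not assert the converse implication you seem to use. An inductive argument along your lines can be made to work, but it requires first checking that for a very general fibre $F$ the $\alpha$-trivial chains connecting two general points of $F$ actually stay inside $F$, and then invoking the inductive ``in particular'' clause on $\tilde F$; neither step is what you wrote.

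The paper avoids this induction entirely with a cleaner reduction. By construction of $f$, two general points of a general fibre $F_0$ are joined by a connected curve; hence $F_0$ is algebraically connected, and by Campana's theorem \cite[p.~212, Cor.]{Cam81} the compact K\"ahler space $F_0$ is in fact \emph{algebraic}. One may therefore apply the already-established projective nef reduction \cite[Thm.~2.4]{8authors} directly to $\alpha|_{F_0}$ and conclude $\alpha|_{F_0}=0$. For an arbitrary compact fibre $F$ of the correct dimension, the paper then argues that $F$ is homologous to a multiple of $F_0$, so $\alpha \cdot \omega^{d-1} \cdot F = 0$ for any K\"ahler class $\omega$, and nefness of $\alpha$ forces $\alpha|_F=0$. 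Your proposal does not address the passage from the general fibre to an arbitrary compact fibre of the right dimension.
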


For the convenience of the reader we sketch how to adapt the proof from \cite{8authors} to this setting.

\begin{proof} We define that two points $x, y \in X$ are equivalent if they can be joined by a
connected curve $C$ such that $\alpha \cdot C = 0$. By \cite[Thm.1.1]{Cam04b} 
there exists an almost holomorphic map $f: X \dashrightarrow Y$ with connected fibers to a normal compact K\"ahler space $Y$
such that two general points $x$ and $y$ are equivalent if and
only if $f(x) = f(y)$. By construction a general $f$-fibre $F_0$ is a normal compact K\"ahler space such that two general points
can be connected by a curve, thus $F_0$ is algebraic \cite[p.212, Cor.]{Cam81}. Hence we can apply \cite[Thm.2.4]{8authors}
to see that $\alpha|_{F_0}=0$. In particular for any K\"ahler form $\omega$ on $X$ we have $\alpha \cdot \omega^{d-1} \cdot F_0=0$
where $d:=\dim X-\dim Y$. Since any compact $f$-fibre $F$ of dimension $d$ is homologous to some multiple of $F_0$ and $\alpha$ is nef we see that $\alpha|_F=0$.
\end{proof}

\begin{notation} {\rm 
Suppose that the extremal ray $R = \R^+ [\Gamma_{i_0}]$ is divisorial, and let $S$ be the surface 
such that $S \cdot R<0$ (cf. Remark \ref{remarkrays}).
Let $\nu: \tilde S \to S \subset X$ be the normalisation.
By Lemma \ref{lemmarationalrepresentative}(a) there exists a curve $C \subset X$ such that $[C] \in R$ and $\dim_C \Chow{X}>0$.
Since we have $S \cdot C<0$, the
deformations  $(C_t)_{t \in T}$ of $C$ induce a
dominating family $(\tilde C_t)_{t \in T'}$ of $\tilde S$ such that $\nu^*(\alpha) \cdot \tilde C_t=0$. The class
$\nu^*(\alpha)$ is a nef class on $\tilde S$ and we may consider the nef reduction
$$ 
\tilde f: \tilde S \to \tilde B$$
with respect to $\nu^*(\alpha)$, 
cf. Theorem \ref{theoremnefreduction}.
By definition of the nef reduction
this implies
$$ n(\alpha) := \dim \tilde B \in \{0,1\}.$$}
\end{notation} 

\begin{lemma} \label{lemmacontractdivisor}
\begin{enumerate} 
\item Suppose that the extremal ray $R$ is divisorial and $n(\alpha) = 0$. Then the surface 
$S$ can be blown down to a point $p$: there exists a bimeromorphic
morphism $\varphi: X \to Y$ 
to a normal compact  threefold $Y$ with $\dim \varphi(S) = 0$ such that $\varphi|_{X \setminus S}$ is an isomorphism onto $Y \setminus \{p\}$.
\item Suppose that the extremal ray $R$ is divisorial and $n(\alpha) = 1$. 
Then there exists a fibration $\holom{f}{S}{B}$ onto a curve $B$ such that
a curve $C \subset S$ is contracted if and only if $[C] \in R$.
Moreover the surface $S$ can be contracted onto a curve:
there exists a bimeromorphic
morphism $\varphi: X \to Y$ 
to a normal compact threefold $Y$ such that
$\varphi|_S=f$ and $\varphi|_{X \setminus S}$ is an isomorphism onto $Y \setminus B$.
\end{enumerate} 
\end{lemma}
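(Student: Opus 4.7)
The plan is to treat the two cases in parallel: in each case, first extract from $\nu^{*}\alpha$ a contraction structure on the surface $S$ (or on $\tilde S$), then promote it to a bimeromorphic morphism $\varphi: X \to Y$ via a Grauert-type analytic contraction criterion applied to the normal bundle of $S$ in $X$. Common to both cases, I will use that $S$ is the unique surface with $S \cdot R < 0$ (Remark \ref{remarkrays}), that its minimal desingularisation $\pi: \hat S \to S$ is projective and uniruled (Lemma \ref{lemmasurfaces}), and that, by the supporting property from Proposition \ref{propositionperp}, a curve $C \subset X$ satisfies $\alpha \cdot C = 0$ if and only if $[C] \in R$. The strategy closely mirrors the divisorial contraction argument of \cite[Lemma 7.5]{HP15}, with $K_X$ replaced throughout by $K_X+\omega$; since $\omega$ is nef, the numerical inequalities all remain valid.

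For (a), the vanishing $n(\alpha)=0$ gives $\nu^{*}\alpha \equiv 0$ on $\tilde S$. Hence every irreducible curve of $S$ has class in $R$ and so $S \cdot C < 0$, which I will upgrade to ampleness of the conormal sheaf $N_{S/X}^{*} = \sO_{S}(-S|_{S})$ by pulling back to the projective uniruled surface $\hat S$ and descending via the formulas of Subsection \ref{subsectionsurfaces}. With this ampleness, Grauert's contraction theorem produces $\varphi: X \to Y$ onto a normal compact threefold blowing $S$ down to a single point $p$ and inducing an isomorphism outside $S$. For (b), the first task is to descend the nef reduction $\tilde f: \tilde S \to \tilde B$ through the normalisation $\nu$. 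I plan to verify that every irreducible component $D_{i}$ of the conductor $D \subset \tilde S$ is contained in a single fibre of $\tilde f$: if $D_{i}$ dominated $\tilde B$, Theorem \ref{theoremnefreduction}(ii) would force $\nu^{*}\alpha \cdot D_{i} > 0$, hence $\alpha \cdot \nu(D_{i}) > 0$, contradicting the fact that $\nu(D_{i}) \subset S$ is a curve swept out by $R$-deformations. Together with the fact that the general fibre of $\tilde f$ is a rational curve (Lemma \ref{lemmarationalrepresentative}), this shows the fibres of $\nu$ lie in fibres of $\tilde f$, so Stein factorisation produces a morphism $f: S \to B$ onto a normal curve $B$ whose contracted curves are exactly those with class in $R$.

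Finally, the global extension in (b) follows by a relative version of the same Grauert-type argument as in (a): each fibre $F_{b}$ of $f$ is a connected projective curve with $S \cdot F_{b} < 0$, so $\sO_{S}(-S|_{S})$ is $f$-ample, and the analytic contraction along $f$ extends to $\varphi: X \to Y$ onto a normal compact threefold with $\varphi|_{S}=f$ and $\varphi|_{X \setminus S}$ an isomorphism onto $Y \setminus B$. The main obstacle will be the descent of $\tilde f$ through $\nu$ in (b), where the interaction between the conductor and the nef reduction has to be controlled; a secondary technical point is passing from ampleness of $N_{S/X}^{*}$ on the resolution $\hat S$ to ampleness on the possibly non-normal surface $S$, which I would handle by the same Kollár-type descent used in Subsection \ref{subsectionsurfaces}.
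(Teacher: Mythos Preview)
Your approach is essentially the same as the paper's: both reduce to the divisorial contraction arguments of \cite{HP15} (specifically \cite[Cor.~7.7, Lemma~7.8, Cor.~7.9]{HP15}, rather than \cite[Lemma~7.5]{HP15}), noting that those arguments rely only on the nef supporting class $\alpha$ and on $K_X\cdot R<0$. The paper's framing is slightly cleaner than yours: rather than replacing $K_X$ by $K_X+\omega$ throughout, it simply observes that $\omega\cdot R>0$ forces $K_X\cdot R<0$, so the \cite{HP15} proofs apply verbatim without modification.
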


\begin{proof}
The proof is identical to the proofs of \cite[Cor.7.7, Lemma 7.8, Cor.7.9]{HP15} which only use properties of the
nef class $\alpha$ and $K_X \cdot R<0$ which holds
since $\omega \cdot R>0$.
\end{proof}

\begin{notation} \label{notationsmalllocus}{\rm
Suppose that the extremal ray $R = \R^+ [\Gamma_{i_0}]$ is small.
Set 
$$
C := \cup_{C_l \subset X, [C_l] \in R} C_l,
$$
then $C$ is a finite union of curves by Remark \ref{remarkrays}. We say that $C$ is contractible
if there exists a bimeromorphic
morphism $\varphi: X \to Y$ 
onto a normal threefold $Y$ with $\dim \varphi(C) = 0$ such that $\varphi|_{X \setminus C}$ is an isomorphism onto 
$Y \setminus \varphi(C)$.}
\end{notation}

The following statement is a variant
of \cite[Prop.7.11]{HP15}.

\begin{proposition} \label{propositionalphabig}
Suppose that the extremal ray $R=\R^+ [\Gamma_i]$ is small. 
Let $S \subset X$ be an irreducible surface. Then we have
$\alpha^2 \cdot S>0$.
\end{proposition}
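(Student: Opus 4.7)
The plan is to argue by contradiction. Suppose $\alpha^2\cdot S=0$, and let $\pi\colon \hat S\to S$ be the composition of the normalisation of $S$ with the minimal resolution of singularities, so that $\hat\alpha:=\pi^*\alpha$ is a nef class on the smooth K\"ahler surface $\hat S$. The projection formula gives $\hat\alpha^2=\alpha^2\cdot S=0$. For any K\"ahler class $\omega_X$ on $X$, the pullback $\pi^*\omega_X$ is nef on $\hat S$ with $(\pi^*\omega_X)^2 = \omega_X^2\cdot S>0$, so $\hat S$ carries a big and nef class. Hence $\hat S$ is a smooth Moishezon surface, and therefore projective. This projectivity will be essential in what follows.

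The strategy from here is to produce a covering one-parameter family of irreducible $\hat\alpha$-trivial curves on $\hat S$, push it forward to $X$, and contradict the smallness of $R$. Applying the nef reduction of Theorem \ref{theoremnefreduction} to $\hat\alpha$ yields an almost holomorphic map $g\colon \hat S\dashrightarrow B$ with $\dim B=n(\hat\alpha)\leq 1$, the bound coming from $\hat\alpha^2=0$. If $n(\hat\alpha)=1$, the general fibres of $g$ supply a one-parameter family of pairwise disjoint irreducible $\hat\alpha$-trivial curves sweeping out $\hat S$. If $n(\hat\alpha)=0$, that is $\hat\alpha\equiv 0$, I use the projectivity of $\hat S$: a pencil inside the linear system of a sufficiently ample divisor gives a covering family of irreducible curves on $\hat S$ (by Bertini), all automatically $\hat\alpha$-trivial.

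For a general member $F$ of the family so obtained, $F$ is not among the finitely many $\pi$-exceptional irreducible curves, so $\pi_*F$ is an irreducible curve in $X$; moreover $\alpha\cdot\pi_*F=\hat\alpha\cdot F=0$, and the supporting property of $\alpha$ (Proposition \ref{propositionperp}) forces $[\pi_*F]\in R$. Since $\pi$ is birational, distinct general members of the family produce pairwise distinct irreducible curves in $X$: in the fibration case the $\pi_*F$ are pairwise disjoint, and in the pencil case they are distinct effective divisors on $S$. Thus $X$ contains infinitely many distinct irreducible curves whose classes lie in $R$, contradicting the smallness of $R$, which by Remark \ref{remarkrays} permits only finitely many such curves.

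The step I expect to be most delicate is the case $n(\hat\alpha)=0$, for which no fibration is available and the covering family must be manufactured by hand on $\hat S$; this is precisely why the preliminary projectivity of $\hat S$ (via the big and nef class $\pi^*\omega_X$) is needed at the outset. The remainder of the argument closely follows the proof of \cite[Prop.7.11]{HP15}, where the analogous statement for $K_X$-negative small rays is established.
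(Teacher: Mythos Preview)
Your projectivity claim for $\hat S$ is incorrect, and this breaks the argument in the case $n(\hat\alpha)=0$. Having a nef class with positive self-intersection does \emph{not} make a compact K\"ahler surface Moishezon: any non-projective K3 surface already carries such classes (its K\"ahler classes). ``Big'' in the transcendental sense (positive volume, or equivalently the class of a K\"ahler current) only guarantees that the surface is in Fujiki's class~$\mathcal C$, which you already knew. So when $\hat\alpha\equiv 0$ you have no ample pencil to fall back on, and a priori $\hat S$ might contain only finitely many curves; your covering family cannot be manufactured and the contradiction fails.

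The paper obtains projectivity of $\hat S$ by a completely different route that uses the adjoint structure of $\alpha$: after rescaling one writes $\alpha=(K_X+\omega)+\eta$ with $\eta$ K\"ahler, and then a short Hodge-index computation from $\alpha^2\cdot S=0$ forces $(K_X+\omega)|_S$ to be non-pseudoeffective. Lemma~\ref{lemmasurfaces} then identifies $S$ as one of the finitely many surfaces in the divisorial Zariski decomposition and shows $\hat S$ is uniruled, hence projective. From there the paper does not use the nef reduction at all; instead it shows $(K_{\hat S}+\pi^*\omega|_S)\cdot\pi^*\alpha|_S<0$ and applies Araujo's decomposition \cite{Ara10} of the movable cone to reach a contradiction. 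Your nef-reduction endgame would likely work as an alternative once projectivity is established correctly (and in fact rules out $\alpha|_S=0$ immediately, since $S$ then has infinitely many curves), but the step you flagged as ``most delicate'' is exactly where your shortcut fails and where the paper invests real work.
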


\begin{proof}
By hypothesis, the cohomology class $\alpha-(K_X+\omega)$ is positive on the extremal ray $R$, moreover we know 
by Proposition \ref{propositionperp} that $\alpha$ is positive on
$$
\left( \NAX_{(K_X+\omega) \geq 0} + \sum_{i \in I, i \neq i_0 } \R^+ [\Gamma_i] \right) \setminus \{0\}.
$$ 
Thus, up to replacing $\alpha$ by some positive multiple, we may suppose that $\alpha-(K_X+\omega)$ is
positive on $\NAX \setminus \{ 0 \}$. Since $X$ is a K\"ahler space, this implies by \cite[Cor.3.16]{HP15} 
that 
$$
\eta := \alpha-(K_X+\omega)
$$ 
is a K\"ahler class.
Arguing by contradiction we suppose
that $\alpha^2 \cdot S=0$.

We first claim that $(K_X+\omega)|_S$ is not pseudoeffective.
If $\alpha|_S=0$ this is obvious, so suppose $\alpha|_S \neq 0$.
Then we have
$$
0 = \alpha^2 \cdot S = (K_X+\omega) \cdot \alpha \cdot S
+ \eta \cdot \alpha \cdot S 
$$ 
and 
$$
\eta \cdot \alpha \cdot S = \eta|_S \cdot \alpha|_S >0
$$ 
by the Hodge index theorem (note hat if $\holom{\pi}{S'}{S}$ is a desingularisation, then $\pi^*(\eta|_S)$ is nef and big and $\pi^*(\alpha|_S)$
is nef, so the ``smooth'' Hodge index theorem applies). Thus we have
\begin{equation} \label{help1}
(K_X+\omega) \cdot \alpha \cdot S = (K_X+\omega)|_S \cdot \alpha|_S < 0.
\end{equation}
In particular $(K_X+\omega)|_S$ is not pseudoeffective, 
the class $\alpha|_S$ being nef. 

Since $(K_X+\omega)|_S$ is not pseudoeffective, we know by
Lemma \ref{lemmasurfaces} that $S$ is uniruled and  one of
the surfaces in the Zariski decomposition \eqref{Bdecomposition}. In particular we cannot have
$\alpha|_S = 0$ since $S$ contains infinitely many curves (recall that the ray $R$ is small, hence $\alpha \cdot C = 0$ can occur only for
finitely many curves $C$). 
Using the decomposition \eqref{Bdecomposition} and \eqref{help1} we obtain $\alpha \cdot S^2<0$, hence
\begin{equation} \label{help2}
(K_X+\omega+S) \cdot \alpha \cdot S < 0.
\end{equation}
Let \holom{\pi}{\hat S}{S} be the composition of the normalisation and the minimal resolution (cf. Subsection \ref{subsectionsurfaces}), then \eqref{equationresolve}
and \eqref{help2} imply that
\begin{equation} \label{help3}
(K_{\hat S}+ \pi^* \omega|_S) \cdot \pi^* \alpha|_S < 0.
\end{equation}
Since the surface $\hat S$ is projective, the
nef class $\pi^* \alpha|_S$ is represented by
an $\R$-divisor. 
The extremal ray $R$ contains only the classes of finitely many curves, so $\pi^* \alpha$ is strictly
positive on every movable curve in $\hat S$.

Fix an ample divisor $A$ on $\hat S$. 
By \cite[Thm.1.3]{Ara10} for every $\varepsilon>0$ we have a decomposition
$$
\pi^* \alpha|_S = C_{\varepsilon} + \sum \lambda_{i, \varepsilon} M_{i, \varepsilon}
$$
where $\lambda_{i, \varepsilon} \geq 0$,  the $M_{i, \varepsilon}$ are movable curves and $(K_{\hat S}+\varepsilon A) \cdot C_\varepsilon \geq 0$. The class $\pi^* \alpha|_S$ is strictly
positive on every movable curve in $\hat S$, so we have $\pi^* \alpha|_S \cdot M_{i, \varepsilon}>0$.
Since $(\pi^* \alpha|_S)^2=0$ and $\pi^* \alpha|_S \cdot M_{i, \varepsilon}>0$ we 
must have
$\pi^* \alpha|_S = C_{\varepsilon}$
for all $\varepsilon>0$. Passing to the limit we obtain $K_{\hat S} \cdot \pi^* \alpha|_S \geq 0$, a contradiction to \eqref{help3}.
\end{proof}

\begin{theorem} \label{theoremsmallray}
Suppose that the extremal ray $R$ is small. 
Then $C$ is contractible. 
\end{theorem}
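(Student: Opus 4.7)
The plan is to apply the nef reduction of Theorem \ref{theoremnefreduction} to the supporting class $\alpha$ from Proposition \ref{propositionperp}, show that its nef dimension equals three so that the resulting map is bimeromorphic with exceptional locus $C$, and then upgrade this almost holomorphic meromorphic map to an honest holomorphic contraction.

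First I would analyse the nef reduction $f \colon X \dashrightarrow Y$ of $\alpha$ and prove $n(\alpha) = \dim Y = 3$. The value $\dim Y = 0$ is excluded because $\alpha \not\equiv 0$: by Proposition \ref{propositionperp} the class $\alpha$ is strictly positive off the ray $R$. If $\dim Y = 1$, a general fibre $F$ would be an irreducible surface with $\alpha|_F \equiv 0$, so $\alpha^2 \cdot F = 0$, contradicting Proposition \ref{propositionalphabig}. If $\dim Y = 2$, the general fibres would be curves $F_t$ with $\alpha \cdot F_t = 0$ sweeping out $X$; as $\alpha$ supports $R$, every $[F_t]$ lies in $R$, so curves with class in $R$ would cover all of $X$. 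This contradicts Remark \ref{remarkrays}, according to which smallness of $R$ forces such curves to form the finite union $C$. Hence $f$ is bimeromorphic, and by the second item of Theorem \ref{theoremnefreduction} its exceptional set consists precisely of curves whose class lies in $R$, i.e.\ equals $C$.

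The main obstacle is to promote the meromorphic $f$ to a holomorphic bimeromorphic morphism $\varphi \colon X \to Y$ contracting $C$ to finitely many points and restricting to an isomorphism on the complement. I would proceed by resolving the indeterminacy via a bimeromorphic morphism $\mu \colon \tilde X \to X$ and setting $g := f \circ \mu \colon \tilde X \to Y$; the $g$-exceptional locus is then the union of the $\mu$-exceptional divisors with the strict transforms of the finitely many irreducible components of $C$ (Remark \ref{remarkrays}). The rigidity lemma applied to the bimeromorphic morphism $\mu$ lets $g$ descend to a holomorphic $\varphi \colon X \to Y$ which is an isomorphism away from $C$. The positivity $\alpha^2 \cdot S > 0$ for every surface $S$ (Proposition \ref{propositionalphabig}) ensures that no divisor is contracted by $\varphi$, so $\varphi$ is the sought small contraction; normality of $Y$ is recovered via Stein factorisation. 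This closely parallels the argument of \cite[Thm.7.12]{HP15}.
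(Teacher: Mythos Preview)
Your computation that $n(\alpha)=3$ is fine, but the second half of the proposal has a genuine gap: when $n(\alpha)=\dim X$ the nef reduction of Theorem \ref{theoremnefreduction} is content-free as far as constructing a contraction goes. The identity map $X \to X$ already satisfies both conditions of that theorem (condition (a) is vacuous for zero-dimensional fibres, and condition (b) is exactly the statement that through a general point every curve is $\alpha$-positive), so the target $Y$ is only determined up to bimeromorphic equivalence and nothing in the construction forces the curves of $C$ to be collapsed. Concretely, take $Y=X$ and $f=\mathrm{id}$: then your resolution $\mu$ is any modification, $g=\mu$, and rigidity gives back $\varphi=\mathrm{id}$, which contracts nothing. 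In general there is no reason for $g$ to be constant on $\mu$-fibres, because the indeterminacy locus of a bimeromorphic nef reduction is not governed by the $\alpha$-trivial curves. So the descent step fails, and your appeal to \cite[Thm.7.12]{HP15} at the end does not close the gap either, since that result needs bigness of $\alpha$ as input, which you have not established (and which does not follow from $n(\alpha)=3$).

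The paper's proof is different and short: it shows directly that $\alpha$ is \emph{big}. After rescaling, Proposition \ref{propositionperp} together with \cite[Cor.3.16]{HP15} gives that $\eta:=\alpha-(K_X+\omega)$ is a K\"ahler class, and since $K_X+\omega$ is pseudoeffective by Lemma \ref{lemmapseudoeffective} one has $\alpha=(K_X+\omega)+\eta$ as the sum of a pseudoeffective and a K\"ahler class, hence big. With bigness in hand the proof of \cite[Thm.7.12]{HP15} applies verbatim to produce the actual holomorphic contraction of $C$; Proposition \ref{propositionalphabig} enters there, not as a substitute for bigness but as one of the ingredients controlling the null locus of $\alpha$.
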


\begin{proof} 
Let $\alpha  \in N^1(X)$ be the nef class supporting $R$ as in Proposition~\ref{propositionperp}. 
{\it We claim that the class $\alpha$ is big}, i.e., if $\holom{\pi}{X'}{X}$ is a desingularisation then the pull-back $\pi^* \alpha$ is a big cohomology class. Once we have shown this property, the proof of \cite[Thm.7.12]{HP15} applies.

{\em Proof of the claim.}
By definition of the class $\alpha$, the class  $-(K_X+\omega) +\alpha$ is positive on the extremal ray $R$.
Since $\alpha$ is strictly positive on 
$$
\left( \NAX_{(K_X+\omega) \geq 0} + \sum_{i \in I, i \neq i_0 } \R^+ [\Gamma_i] \right) \setminus \{0\},
$$ 
we may suppose, up to replacing $\alpha$ by some positive multiple, that $-(K_X+\omega) +\alpha$ 
is strictly positive on this cone. In total, $-(K_X+\omega) + \alpha$ is strictly positive on $\NAX \setminus \{ 0 \}$.
Thus $-(K_X+\omega) + \alpha$ is a K\"ahler class 
by \cite[Cor.3.16]{HP15}, i.e., we may write
$$
\alpha = (K_X+\omega) + \eta,
$$
where $\eta$ is a K\"ahler class. We know by Lemma \ref{lemmapseudoeffective} that $K_X+\omega$ is pseudoeffective.
Thus $\pi^* \alpha$ is the sum of the pseudoeffective class $\pi^* (K_X+\omega)$ and the nef and big class $\pi^* \eta$,
hence it is big.
\end{proof} 

\begin{proof}[Proof of Theorem \ref{theoremadjointcontraction}]
The existence of a morphism \holom{\varphi}{X}{Y} contracting exactly the curves in the extremal ray 
is established in Lemma  \ref{lemmacontractdivisor}
and in Theorem \ref{theoremsmallray}. Since $\omega$ is nef,
the extremal ray $\R^+ [\Gamma_i]$ is $K_X$-negative.
Therefore, applying  \cite[Cor.8.2]{HP15}, it follows that
$Y$ is a K\"ahler space.
\end{proof}

\subsection{Proof of Theorem \ref{theoremadjointMMP}}

\begin{proof}

{\em Step 1: Running the MMP.} If $K_X+\omega$ is nef
for every normalised K\"ahler class $\omega$, we are finished. Suppose therefore that $K_X+\omega$ is not nef.
Then there exists by Theorem \ref{theoremadjointNA} a $(K_X+\omega)$-negative extremal ray $R$ in $\NAX$.
By Theorem \ref{theoremadjointcontraction} the contraction  \holom{\varphi}{X}{Y} of $R$ exists in the K\"ahler category. 
Note that since $\omega$ is nef, the canonical class $K_X$
is negative on the extremal ray $R$.

If $R$ is divisorial we can continue the MMP with $Y$ by \cite[Prop.8.1.c)]{HP15}.  
If $R$ is small, we know by Mori's flip theorem \cite[Thm.0.4.1]{Mor88} that the flip $\holom{\varphi^+}{X^+}{Y}$ exists, and
by \cite[Prop.8.1.d)]{HP15} we can continue the MMP with $X^+$ (which is again K\"ahler). 

{\em Step 2: Termination of the MMP.} Recall that for a normal compact threefold $X$ with at most terminal singularities, 
the difficulty $d(X)$ \cite{Sho85} is defined by
$$
d(X) := \# \{ i \ | \ a_i < 1 \},
$$
where $K_Y = \mu^* K_X + \sum a_i E_i$ and $\holom{\mu}{Y}{X}$ is any resolution of singularities.
Recall that any contraction in our MMP is a $K_X$-negative
contraction, so
by \cite[Lemma 5.1.16]{KMM87}\footnote{The proof is local in a neighbourhood of the flipping locus, so it holds without change in the analytic setting.}, \cite{Sho85} we have $d(X)>d(X^+)$, if $X^+$ is the flip of a small contraction.
Since the Picard number and the difficulty are non-negative integers, any MMP terminates after finitely many steps.
\end{proof}


\section{The base-point free theorem}

We first prove Theorem \ref{theorembpf}, which is the analogue of the base point free theorem in the non-algebraic case.

\subsection{Proof of Theorem \ref{theorembpf}}

\begin{proof}
We will use the nef reduction of $X$ with respect to the cohomology class $K_X + \omega$,
cf. Theorem \ref{theoremnefreduction}. 
We denote by $n(K_X + \omega)$ the dimension of the base of the nef reduction of $K_X+\omega$ and claim that
$$  n(K_X + \omega) = 2. $$
Notice first that the general fibres of the MRC-fibration provide
a dominating family of curves which is $K_X+\omega$-trivial, so $  n(K_X + \omega)  \leq 2.$\\
If $n(K_X +\omega) = 1$ the nef reduction is a holomorphic fibration $X \rightarrow C$ (cf. \cite[2.4.3]{8authors}) 
and $K_X+\omega$ is numerically trivial on the general fibre by Theorem \ref{theoremnefreduction}.
In particular the general fiber is a smooth Fano surface, hence rationally connected,
a contradiction to our assumption on the base of the MRC-fibration. \\
If  $n(K_X + \omega) = 0,$ then $K_X + \omega \equiv 0,$ hence $X$ is Fano
and rationally connected, again a contradiction.

Let $Z$ be a resolution of singularities of the unique irreducible component of $\Chow{X}$ such that the general
point corresponds to the general fibre of the MRC-fibration. Let $\Gamma$ be the normalisation of the pull-back of
the universal family and denote by $\holom{p}{\Gamma}{X}$ and $\holom{q}{\Gamma}{Z}$ the natural morphisms.
Since $\Gamma$ is in Fujiki's class $\mathcal C$, the
surface $Z$ is in the class $\mathcal C$  by \cite[Thm. 3]{Var86}. 
A smooth surface in the class $\mathcal C$ is K\"ahler,
so $Z$ is K\"ahler.

We claim that there exists a big and nef  class $\alpha$ on $Z$
such that 
$$
p^* (K_X+\omega) = q^* \alpha.
$$

{\em Step 1. Construction of the class $\alpha$.}
Set $\Gamma_z = q^{-1}(z) $ for $z \in Z.$ 
Note first that we have $R^1 q_*(\sO_{\Gamma}) = 0$ (the morphism $q$ is projective, so we can apply \cite[II, 2.8.6.2]{Kol96}). 
Using the exponential sequence this implies  $R^1 q_*(\mathbb Z) = 0$ and hence $R^1 q_*(\mathbb R) = 0$ by the universal coefficient theorem. 
Now we apply the Leray spectral sequence for $q$ and the sheaf $\mathbb R$. By what precedes we have 
$$E_2^{0,1} = H^0(Z,R^1q_*(\mathbb R)) = 0$$
and $$E_2^{1,1} = H^1(Z,R^1q_*(\mathbb R)) = 0.$$
Therefore 
$E_2^{2,0} = H^2(Z,\mathbb R)$ 
embeds into $H^2(\Gamma, \mathbb R)$, 
and
it suffices to show that the section $s \in E_2^{0,2} = H^0(Z,R^2 q_*(\mathbb R))$ which is given by
the class 
$$
s(z) = [p^*(K_X + \omega) \vert \Gamma_z] \in H^2(\Gamma_z,\mathbb R),
$$
vanishes for every $z \in Z$. 
By definition of a normalised K\"ahler class we
have $s(z)=0$ for $z \in Z$ general. Since $p^*(K_X + \omega)$ is nef, this implies 
that the class $p^*(K_X + \omega)$ is zero on all the irreducible components of any fibre $\Gamma_z$.
Thus we have $s(z) = 0$ for all $z \in Z$
proving the existence of $\alpha$. Note that since $q^* \alpha$ is nef, the class $\alpha$ is nef \cite[Thm.1]{Pau98}.

{\it Step 2. Intersection numbers.} 
Let $D \subset \Gamma$ be an irreducible component (possibly of dimension $1$)
of the $p$-exceptional locus. 
Since $p$ is finite on the
fibres of $q$, there exists a curve $C \subset D$ that is
contracted by $p$ and such that $q(C)$ is not a point. In
particular we have
$$
\alpha \cdot q(C) = q^* \alpha \cdot C =
p^* (K_X+\omega) \cdot C = 0.
$$
Since the meromorphic map $X \dashrightarrow Z$
is almost holomorphic, $D$ does not surject
onto $Z$. Thus we have $q(D)=q(C)$, and by what precedes
we obtain
$$
(q^* \alpha)|_D = 0.
$$
Note now that, $\Gamma$ being a modification of 
a threefold which has a finite singular locus, the singular locus of $\Gamma$ is a union
of curves which are contained in the $p$-exceptional
locus and finitely many points. 
Let $\holom{\mu}{\hat X}{\Gamma}$ be a desingularisation
such the exceptional set of $\hat p:= p \circ \mu$ has pure codimension one. Set moreover $\hat q:=q \circ \mu$.
By what precedes, 
\begin{equation} \label{vanish}
\hat q^* \alpha \cdot {\hat D} = 0   \ \mbox{in $N_1(\hat X)$} 
\end{equation}
for every irreducible component $\hat D$ of the $\hat p$-exceptional locus. 

{\em Step 3. The class $\alpha$ is big, i.e. we have $\alpha^2>0$.}
Since $\omega$ is a K\"ahler class, we know that, up to replacing
$\hat X$ by some further blowup, 
there exists an effective $\Q$-divisor $F$ with support
in the $\hat p$-exceptional locus such that
$$
\hat p^* \omega - F
$$
is a K\"ahler class. Being a K\"ahler class is an open property, so
there exists a K\"ahler class $\eta_Z$ 
on $Z$ such that
$$
\hat p^* \omega - F - \hat q^* \eta_Z
$$
is a K\"ahler class. Using P\v aun's theorem \cite[Thm.1.1]{Pau12}  
as in the proof of Lemma \ref{lemmapseudoeffective}, we conclude that
$$
K_{\hat X/Z} + \hat p^* \omega - F - \hat q^* \eta_Z
$$
is pseudoeffective. Since $X$ has terminal singularities, 
$$
K_{\hat X} = \hat p^* K_X + E
$$ 
with $E$ an effective $\Q$-divisor 
supported on the $\hat p$-exceptional locus.
Consider now the decomposition
\begin{equation} \label{decompose}
\hat p^* (K_X+\omega) = 
[K_{\hat X/Z}+ \hat p^* \omega - F - \hat q^* \eta_Z]
- E + F + \hat q^* K_Z + \hat q^* \eta_Z.
\end{equation}
We are going to intersect this equation with $\hat q^*(\alpha)$ in order to compute 
$$
\hat q^* \alpha^2 
= \hat q^* \alpha \cdot \hat p^* (K_X+\omega).
$$
Since $\alpha$ is nef, the intersection product
$$
\hat q^* \alpha \cdot  [K_{\hat X/Z}+ \hat p^* \omega - F - \hat q^* \eta_Z]
$$
is an element of $\NA{\hat X}$.
By \eqref{vanish} we have $\hat q^* \alpha \cdot (-E+F)=0$.
The surface $Z$ is not uniruled since it is the base of the MRC-fibration (cf. Remark \ref{remarkassumption}). Thus $K_Z$ is pseudoeffective, in particular the intersection product
$\hat q^* \alpha \cdot \hat q^* K_Z$
is an element of $\NA{\hat X}$.
Recall now that $\alpha \neq 0$ since $K_X+\omega \neq 0$.
Since $\eta_Z$ is a K\"ahler class and $\alpha$ is a non-zero nef class, the Hodge index
theorem yields $\eta_Z \cdot \alpha>0$. Thus 
$$
q^* \alpha \cdot q^* \eta_Z
$$
is a non-zero element of $\NA{\hat X}$. In total we obtain that
$$
\hat q^* \alpha^2 
= \hat q^* \alpha \cdot \hat p^* (K_X+\omega)
$$
is a non-zero element of $\NA{\hat X}$. Thus we have
$\alpha^2 \neq 0$.
 
{\em Step 4. Construction of the fibration $\varphi$.}
Let 
$$
E := \cup E_j \subset Z
$$
be the union of curves $E_j \subset Z$ such that $\alpha \cdot E_j=0$. Since $\alpha$ is nef and big,
the Hodge index theorem implies that the intersection form
on $E$ is negative definite. In particular $E$ is a finite set. By Grauert's criterion
there exists a bimeromorphic
morphism $\holom{\nu}{Z}{S}$
such that $E$ equals the $\nu$-exceptional locus. 
Since $Z$ is a K\"ahler surface and $\nu$ contracts
only subvarieties onto points, the surface $S$ is K\"ahler. In fact, take any K\"ahler form $\omega$ on $Z.$ Then  the class of the K\"ahler current 
$\nu_*(\omega) $  contains a K\"ahler form by \cite[Prop.3.3(iii)]{DP04}.

We claim that the fibration $\holom{\nu \circ q}{\Gamma}{S}$ factors through the bimeromorphic map $p$, i.e., there exists a holomorphic
fibration $\holom{\varphi}{X}{S}$ such that $\nu \circ q = \varphi \circ p$. By the rigidity lemma \cite[Lemma 4.1.13]{BS95}  it is sufficient to prove that every $p$-fibre
is contracted by $\nu \circ q$. Since $p$ is a Moishezon morphism, it moreover suffices to show that every curve $C \subset \Gamma$ such that
$p(C)$ is a point is contracted by $\nu \circ q$. Yet for such a curve $C$ we have
$$
q^* \alpha \cdot C = p^*(K_X+\omega) \cdot C = 0.
$$
It follows that $q(C) \subset E$, hence $q(C)$ is a point. This shows the existence of the fibration $\varphi$; by construction
the class $K_X+\omega$ is $\varphi$-trivial.
\end{proof}

\subsection{MMP for uniruled K\"ahler threefolds}

Recall that in our context
a normal K\"ahler space $X$ is $\Q$-factorial
if every Weil divisor $D \subset X$ is $\Q$-Cartier
and some reflexive power $\omega_X^{[m]}$ of the
dualising sheaf $\omega_X$ is locally free.

\begin{lemma} \label{lemmacontractionfibretype}
Let $X$ be a normal $\Q$-factorial compact K\"ahler threefold with at most terminal singularities.
Let $\holom{\varphi}{X}{S}$ be an elementary
Mori contraction onto a normal compact
surface, i.e., $\rho(X/S) = 1$ and $-K_X$ is $\varphi$-ample. 

Then $S$ is $\Q$-factorial and has at most klt singularities.
\end{lemma}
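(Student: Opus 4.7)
The plan is to establish the two properties in sequence, exploiting that $-K_X$ being $\varphi$-ample makes $\varphi$ a projective morphism, so that the standard projective relative MMP applies in a neighbourhood of every point of $S$.

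For $\Q$-factoriality, I would take a prime Weil divisor $D$ on $S$ and form its divisorial pullback $\tilde D := \varphi^{-1}(D)$, which is a Weil divisor on $X$ and hence $\Q$-Cartier by the $\Q$-factoriality of $X$. A general fibre $F \simeq \PP^1$ of $\varphi$ lies over a general point of $S$, so $\tilde D \cap F = \emptyset$ and $\tilde D \cdot F = 0$. Since $K_X \cdot F = -2 \neq 0$ and $\rho(X/S) = 1$, the divisor $\tilde D$ must be $\varphi$-numerically trivial. The relative base point free theorem for projective morphisms (cf.\ \cite{Nak87}, \cite{KM98}) then yields $\tilde D \sim_{\Q,\varphi} \varphi^* M$ for some $\Q$-Cartier divisor $M$ on $S$, and pushing forward by $\varphi$ (whose general fibres are connected) gives $D = M$. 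Hence $D$ is $\Q$-Cartier, and in particular $K_S$ is $\Q$-Cartier.

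For klt-ness I would argue locally around each point $s \in S$. Fix a Stein neighbourhood $U \ni s$; the restriction of $\varphi$ over $U$ is a projective Mori fibre contraction from a $\Q$-factorial terminal (hence klt) threefold of relative Picard number one, so the klt-ness of $U$ follows from the standard projective fact that the base of such a fibration is klt. One concrete route uses the Kawamata-Fujino canonical bundle formula for Fano contractions: using the $\varphi$-ampleness of $-K_X$ together with the relative base point free theorem one promotes $(X, 0)$ to a klt pair $(X, B)$ with $K_X + B \sim_{\Q,\varphi} 0$, and then writes $K_X + B \sim_{\Q,\varphi} \varphi^*(K_U + B_U + M_U)$ with $(U, B_U)$ klt and $B_U \geq 0$; consequently $(U, 0)$ is also klt.

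The main obstacle, as is typical for results of this kind in the K\"ahler category, is to transfer the projective-case machinery. Here this step is mild: because $\varphi$ is relatively projective, every ingredient (the relative base point free theorem, the canonical bundle formula, and the Kawamata-Viehweg vanishing underlying both) is a statement about a projective morphism over a normal base and is already available in the algebraic literature. No genuinely K\"ahler input is needed beyond the existence of the contraction $\varphi$ itself, which was established in the preceding sections.
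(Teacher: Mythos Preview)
Your $\Q$-factoriality argument is essentially the paper's: the paper simply cites \cite[5-1-5]{KMM87}, and what you wrote is the content of that reference. One small imprecision: ``pushing forward by $\varphi$'' is not quite the right phrase, since $\varphi_*$ of a divisor on a threefold to a surface does not give a divisor; the clean way is to compare $m\tilde D$ and $\varphi^* M$ over the smooth locus of $S$ (where $D$ is Cartier and $\tilde D = \varphi^* D$), use injectivity of $\varphi^*$ on Cartier divisors to get $mD \sim M$ there, and then extend across the finite singular set by normality. This is cosmetic.

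For the klt part your route genuinely differs from the paper's. The paper does not invoke the canonical bundle formula at all. Instead it observes (Remark~\ref{remarkconicbundle}) that $\varphi$ is equidimensional and is a conic bundle away from finitely many fibres; then, over a small neighbourhood $U$ of a singular point of $S$, it produces a smooth analytic multisection $H \subset \varphi^{-1}(U)$ with $H \to U$ finite and \'etale in codimension one, and concludes by \cite[Prop.~5.20]{KM98}. This is elementary and tailored to the conic-bundle geometry: one only needs a finite cover \'etale in codimension one from something smooth. Your approach via a boundary $B$ with $K_X+B \sim_{\Q,\varphi} 0$ and the Ambro--Fujino formula is correct in principle and more uniform (it would work for Mori fibre spaces of any relative dimension), but it imports substantially heavier machinery, and your claim that ``every ingredient is already available in the algebraic literature'' is a bit optimistic for the canonical bundle formula over an analytic base---the relative base-point-free theorem and Kawamata--Viehweg vanishing for projective morphisms are indeed in \cite{Nak87}, but the full Ambro--Fujino package for klt-trivial fibrations over analytic bases is more recent than the paper. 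In the present conic-bundle situation this is harmless (the moduli part is trivial and the discriminant computation is classical), but the paper's multisection argument sidesteps the issue entirely.
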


\begin{remark} \label{remarkconicbundle}
In the situation above, the fibration $\varphi$ is equidimensional since an elementary contraction
of fibre type does not contract a divisor. 
For a point $s \in S$ denote by $X_s$ the fibre over $s$, 
and let $ A \subset S$ be the set of all $s$ such that the fiber $X_s$ is singular at some point $x_0$ and such that 
$X$ is not smooth at $x_0.$ Then $A$ is finite, set $S_0 = S \setminus A$ and $X_0 = X \setminus \varphi^{-1}(A).$ 
The fiber space $f_0: X_0 \to S_0$ is a conic bundle. The sheaf $f_*(\omega_{X/S}) $ is reflexive, but might have singularities 
on $A$, so that $f$ might globally not be a conic bundle. However, $H^1(X_s,\mathcal O_{X_s}) = 0,$ in particular,
every irreducible component of any fiber $X_s$ is a smooth rational curve. 
\end{remark} 

\begin{proof}[Proof of Lemma \ref{lemmacontractionfibretype}]
Arguing as in \cite[5-1-5]{KMM87}, 
every Weil divisor $D \subset S$ is $\Q$-Cartier.

In order to see that $S$ has at most klt singularities we proceed as in the algebraic case. 
The claim is local on the base $S$, so given a point $0 \in S_{\sing}$ we fix a small analytic neighbourhood $0 \in U \subset S$. Since $X$ is smooth in codimension two and, by Remark \ref{remarkconicbundle} the projective morphism 
$\varphi$ is a conic bundle in the complement of the fibre $X_0$, 
there exists a smooth analytic subvariety $H \subset \fibre{\varphi}{U}$ such that
$H \rightarrow U$ is finite and \'etale in codimension one. By \cite[Prop.5.20]{KM98}
the surface $U$ has at most klt singularities. In particular some reflexive power $\omega_X^{[m]}$ of the
dualising sheaf $\omega_X$ is locally free.
\end{proof}

\begin{proof}[Proof of Theorem \ref{theoremMFS}]
By Theorem \ref{theoremadjointMMP}, there exists a MMP $X \dasharrow X'$ such that $K_{X'} + \omega'$ is nef for all normalised K\"ahler classes
$\omega' $ on $X'$. Fix such a K\"ahler class $\omega'$. 
Then apply the base point free theorem \ref{theorembpf} to the variety $X'$ to obtain a fibration
$\holom{\varphi}{X'}{S'}$ onto a surface $S'$ such that
$-K_{X'}$ is $\varphi$-ample.  In particular $\varphi$ is a projective morphism. Thus we can run the MMP of $X'$ over $S'$ using
the relative version of the cone and contraction theorem as in \cite[Sect.4]{Nak87}, \cite[Sect.3.6]{KM98}. 
As in the proof of Theorem \ref{theoremadjointMMP}
we can use the Picard number $\rho(X')$ and the difficulty $d(X')$ to
show that the MMP terminates. Since $K_{X'}$ is
not pseudoeffective over $S'$, the outcome of the MMP
$$
X' \dashrightarrow X''
$$
is a Mori fibre space $X'' \rightarrow S''$ over $S',$ 
with $S''$
a normal compact complex surface that dominates $S'$.
Since $S'$ is K\"ahler, and the bimeromorphic morphism $S'' \rightarrow S'$ is projective (we can always find an anti-effective exceptional divisor that is relatively ample), the surface $S''$ is K\"ahler.  
The properties of $S''$ are proven in Lemma \ref{lemmacontractionfibretype}.
\end{proof}


\newcommand{\etalchar}[1]{$^{#1}$}
\def\cprime{$'$}

\end{document}